\def\ge{\geqslant}
\def\le{\leqslant}
\def\d{\delta}
\def\D{\Delta}
\def\e{\epsilon}
\def\t{\tau}
\def\i{^{-1}}
\def\<{\langle}
\def\>{\rangle}
\newcommand{\bG}{\mathbf G}
\newcommand{{\BG}}{\ensuremath{\mathbb {G}}\xspace}
\newcommand{{\BK}}{\ensuremath{\mathbb {K}}\xspace}
\newcommand{\BZ}{\ensuremath{\mathbb {Z}}\xspace}
\newcommand{\CL}{\ensuremath{\mathcal {L}}\xspace}
\newcommand{\CP}{\ensuremath{\mathcal {P}}\xspace}
\newcommand{\CR}{\ensuremath{\mathcal {R}}\xspace}
\newcommand{\CT}{\ensuremath{\mathcal {T}}\xspace}
\newcommand{\CY}{\ensuremath{\mathcal {Y}}\xspace}
\newcommand{\Ad}{{\mathrm{Ad}}}
\newcommand{\Sh}{\mathrm{Sh}}
\newcommand{\id}{\ensuremath{\mathrm{id}}\xspace}
\newcommand{\Ind}{{\mathrm{Ind}}}
\newcommand{\ds}{\displaystyle}
\def\tW{\tilde W}
\def\kk{\mathbf k}
\newtheorem{theorem}{Theorem}
\newtheorem{proposition}[theorem]{Proposition}
\newtheorem{lemma}[theorem]{Lemma}
\newtheorem{corollary}[theorem]{Corollary}
\theoremstyle{definition}
\newtheorem{example}[theorem]{Example}
\newtheorem{remark}[theorem]{Remark}
\numberwithin{equation}{section}
\numberwithin{theorem}{section}
\renewcommand{\to}{%
   \ifbool{@display}{\longrightarrow}{\rightarrow}%
   }
\let\shortmapsto\mapsto
\renewcommand{\mapsto}{%
   \ifbool{@display}{\longmapsto}{\shortmapsto}%
   }
\newlength{\olen}
\newlength{\ulen}
\newlength{\xlen}
\newcommand{\xra}[2][]{%
   \ifbool{@display}%
      {\settowidth{\olen}{$\overset{#2}{\longrightarrow}$}%
       \settowidth{\ulen}{$\underset{#1}{\longrightarrow}$}%
       \settowidth{\xlen}{$\xrightarrow[#1]{#2}$}%
       \ifdimgreater{\olen}{\xlen}%
          {\underset{#1}{\overset{#2}{\longrightarrow}}}%
          {\ifdimgreater{\ulen}{\xlen}%
             {\underset{#1}{\overset{#2}{\longrightarrow}}}
             {\xrightarrow[#1]{#2}}}}%
      {\xrightarrow[#1]{#2}}
   }
\newcommand{\xyra}[2][]{%
   \settowidth{\xlen}{$\xrightarrow[#1]{#2}$}%
   \ifbool{@display}%
      {\settowidth{\olen}{$\overset{#2}{\longrightarrow}$}%
       \settowidth{\ulen}{$\underset{#1}{\longrightarrow}$}%
       \ifdimgreater{\olen}{\xlen}%
          {\mathrel{\xymatrix@M=.12ex@C=3.2ex{\ar[r]^-{#2}_-{#1} &}}}%
          {\ifdimgreater{\ulen}{\xlen}%
             {\mathrel{\xymatrix@M=.12ex@C=3.2ex{\ar[r]^-{#2}_-{#1} &}}}
             {\mathrel{\xymatrix@M=.12ex@C=\the\xlen{\ar[r]^-{#2}_-{#1} &}}}}}%
      {\mathrel{\xymatrix@M=.12ex@C=\the\xlen{\ar[r]^-{#2}_-{#1} &}}}%
   }
\newcommand{\xla}[2][]{%
   \ifbool{@display}%
      {\settowidth{\olen}{$\overset{#2}{\longleftarrow}$}%
       \settowidth{\ulen}{$\underset{#1}{\longleftarrow}$}%
       \settowidth{\xlen}{$\xleftarrow[#1]{#2}$}%
       \ifdimgreater{\olen}{\xlen}%
          {\underset{#1}{\overset{#2}{\longleftarrow}}}%
          {\ifdimgreater{\ulen}{\xlen}%
             {\underset{#1}{\overset{#2}{\longleftarrow}}}
             {\xleftarrow[#1]{#2}}}}%
      {\xleftarrow[#1]{#2}}
   }
\newcommand{\isoarrow}{%
   \ifbool{@display}{\overset{\sim}{\longrightarrow}}{\xrightarrow\sim}%
   }
\begin{document}

\title[]{A generalization of cyclic shift classes}

\author[Xuhua He]{Xuhua He}
\address{The Institute of Mathematical Sciences and Department of Mathematics, The Chinese University of Hong Kong, Shatin, N.T., Hong Kong SAR, China.}
\email{xuhuahe@math.cuhk.edu.hk}

\keywords{Weyl groups, cyclic shifts, parabolic character sheaves}
\subjclass[2010]{20F55, 20G40, 14F05}


\begin{abstract}
Motivated by Lusztig's $G$-stable pieces, we consider the combinatorial pieces: the pairs $(w, K)$ for elements $w$ in the Weyl group and subsets $K$ of simple reflections that are normalized by $w$. We generalize the notion of cyclic shift classes on the Weyl groups to the set of combinatorial pieces. We show that the partial cyclic shift classes of combinatorial pieces associated with minimal-length elements have nice representatives. As applications, we prove the left-right symmetry and the compatibility of the induction functors of the parabolic character sheaves. 
\end{abstract}

\maketitle

\section*{Introduction}

\subsection{Cyclic shifts of Weyl group elements} Let $G$ be a connected reductive group over an algebraically closed field. Let $B$ be a Borel subgroup of $G$ and $W$ be the Weyl group of $G$. Then we have the Bruhat decomposition $G=\sqcup_{w \in W} B \dot w B$. A central theme in geometric representation theory is understanding the (many) mysterious relationships between the reductive group $G$ and its Weyl group $W$ (see, e.g., the Kazhdan-Lusztig theory and the Springer theory). 

We are particularly interested in the conjugation action on $W$. Note that $W$ is a Coxeter group and is generated by simple reflections. In particular, any two elements in the same conjugacy class of $W$ are conjugated by a sequence of simple reflections. We say that two elements are in the same cyclic shift class if they are conjugated by a sequence of simple reflection, and each conjugation preserves the length. This length-preserving condition is crucial in the connection to the conjugation problems on $G$. If $w$ and $w'$ are in the same cyclic shift class, then we have an isomorphism between the quotient stacks \[\tag{a} \frac{B \dot w B}{\D(B)} \cong \frac{B \dot w' B}{\D(B)}.\] Here $\D(B)$ denotes the conjugation action. Such an isomorphism plays an important role in the interaction between the conjugacy classes of $W$ and unipotent conjugacy classes of $G$ (see \cite{Lu}).

\subsection{Cyclic shifts of combinatorial pieces} In this paper, we consider an analogy of (a) for parabolic subgroups. Such an isomorphism will play a role in the study of Lusztig's theory of parabolic character sheaves, which we will discuss in \S\ref{sec:0.4}. 

The object we consider is the quotient stack $\frac{P \dot w P}{\D(P)}$, where $P=L U_P$ is a standard parabolic subgroup of $G$, and $w$ is an element in the Weyl group which normalizes the standard Levi subgroup $L$. In fact, for the applications, the object we consider will be more technical than $\frac{P \dot w P}{\D(P)}$, but we ignore such complexity in the introduction. 

Our first goal is to establish isomorphisms between $\frac{P \dot w P}{\D(P)}$ for various pairs $(w, P)$. Note that the conjugation by simple reflections does not send standard Levi subgroups to Levi subgroups. To overcome the difficulty, we introduce combinatorial pieces. A combinatorial piece is a pair $(w, K)$, where $w$ is an element in the Weyl group, and $K$ is a subset of simple reflections (which represents a standard parabolic subgroup $P_K$ of $G$) such that $w$ normalizes $K$. We then introduce the cyclic shift classes of the combinatorial pieces in \S\ref{sec:def} and prove in Proposition \ref{prop:conj} that if $(w, K)$ and $(w', K')$ are in the same cyclic shift class, then we have an isomorphism between the quotient stacks \[\tag{b} \frac{P_K \dot w P_K}{\D(P_K)} \cong \frac{P_{K'} \dot w' P_{K'}}{\D(P_{K'})}.\]

Note that we may regard the Weyl group $W$ as a subset of the combinatorial pieces via the map $w \mapsto (w, \emptyset)$. The combinatorial pieces $(w, \emptyset)$ and $(w', \emptyset)$ are cyclic shift if and only if $w$ and $w'$ are cyclic shift in the usual sense. In this case, $P_{\emptyset}=B$. Thus (b) can be regarded as a generalization of (a). 

\subsection{Representatives} Our next goal is to obtain nice representatives for certain cyclic shift classes of combinatorial pieces. 

Let $J$ be a subset of the simple reflections and $W_J$ be the standard parabolic subgroup of $W$ generated by $J$. We consider the partial conjugacy classes on $W$, i.e., the orbits of the conjugation by $W_J$ on $W$. By \cite{He07}, we have the decomposition \[\tag{c} \frac{W}{\D(W_J)}=\bigsqcup_{(w, K)} \frac{W_K w}{\D(W_K)},\] where $\frac{W}{\D(W_J)}$ denotes the quotient stack of $W$ by the conjugation action of $W_J$, and the right hand side runs over the pairs $(w, K)$, where $w$ are minimal length representatives in the right cosets $W_J \backslash W$ and $K$ is the maximal subset of $J$ such that $(w, K)$ is a combinatorial piece. Moreover, if $w$ is a minimal length element in its partial conjugacy class, then $w$ is cyclic shift (via partial conjugation, i.e., a sequence of conjugations in $W_J$) to an element in $W_K w$ for some $(w, K)$ occurring in the right side of (c). 

We prove an ``upgraded version'' of the above statement in Theorem \ref{thm:cyc}. Namely, if $w_1$ is a minimal length element in its partial conjugacy class and $(w_1, K_1)$ is a combinatorial piece with $K_1 \subset J$, then \[\tag{d} \text{$(w_1, K_1)$ is cyclic shift (via partial conjugation) to $(w_2, K_2)$},\] where $w_2$ is an element in $W_K w$ for some $(w, K)$ occurring in the right side of (c). 

\subsection{Applications to parabolic character sheaves}\label{sec:0.4}

Now we discuss some applications. Lusztig introduced the parabolic character sheaves in \cite{Lu1}. Roughly speaking, the parabolic character sheaves are certain simple perverse sheaves on the quotient stack $\CY_J=\frac{U_{P_J} \backslash G/U_{P_J}}{\D(L_J)}$. The parabolic character sheaves rely on the following decomposition of $\CY_J$ into $G$-stable pieces: 
\[
\tag{e} \CY_J=\bigsqcup_{w} \CY_{J, w}, \text{ where } \CY_{J, w}``=" \frac{P_K \dot w P_K}{\D(P_K)},
\]
Here $w$ runs over the minimal length representatives in the right coset $W_J \backslash W$. And the quotation mark means we ignore the gerbs for unipotent groups. We refer to \S\ref{sec:zjw} for the precise statement. 

It is worth pointing out that if $(w, K)$ is conjugate to $(w', K')$, then it is easy to construct an isomorphism $\frac{L_K \dot w}{\D(L_K)} \cong \frac{L_{K'} \dot w'}{\D(L_{K'})}$. However, it is more involved to relate $\frac{P_K \dot w P_K}{\D(P_K)}$ for different combinatorial pieces $(w, K)$. The results (b) and (d) on the cyclic shifts of combinatorial pieces allow us to understand certain behavior of the sheaves associated with an arbitrary combinatorial piece with respect to the decomposition (e). 

We give two applications in \S\ref{sec:app}.

(1) The left-right symmetry. 

Note that in the decompositions (c) and (d), we use the right cosets $W_J \backslash W$. One may define similar decompositions using the left cosets $W/W_J$. A priori, the decompositions obtained using the left and right cosets might differ. However, using the cyclic shifts of combinatorial pieces, we show that there exists a symmetry between the left cosets and the right cosets such that the corresponding substacks in $\frac{W}{\D(W_J)}$ and $\CY_J$ coincide. 

(2) Induction functors. 

Let $J \subset J'$. Lusztig defined the functor $f: \Sh(\CY_J) \to \Sh(\CY_{J'})$. Suppose that $w$ (resp. $w'$) is the minimal length representative in its right coset $W_J \backslash W$ (resp. $W_{J'} \backslash W$), $w$ and $w'$ are in the same $W_{J'}$-conjugacy class and $\ell(w)=\ell(w')$. Then we have $$\CY_{J, w}``=" \frac{P_K \dot w P_K}{\D(P_K)} \text{ and } \CY_{J', w'}``=" \frac{P_{K'} \dot w' P_{K'}}{\D(P_{K'})}.$$ In Theorem \ref{thm:main}, we show that the restriction of the functor $f$ to $\CY_{J, w}$ and $\CY_{J', w'}$ is essentially the Harish-Chandra induction functor associated to the reductive group $L_{K'}$ and its Levi subgroups. This result is used in the work of Li, Nadler, and Yun \cite{LNY}. 

\smallskip

{\bf Acknowledgement: } XH is partially supported by funds connected with Choh-Ming Chair at CUHK, by Hong Kong RGC grant 14300221, and by the Xplorer prize. This paper is motivated by a question of Zhiwei Yun on the parabolic character sheaves, which is used in \cite{LNY}. We thank him for explaining to me the question and related materials in \cite{LNY}. We also thank George Lusztig for the helpful discussions.

\section{Cyclic shift classes}

\subsection{Conjugacy graph}\label{sec:cyc} Let $W$ be a Weyl group and $S$ be the set of simple reflections. Let $\ell$ be the length function on $W$ and $\le$ be the Bruhat order on $W$. Let $\d$ be a group automorphism on $W$, which preserves $S$. We consider the $\d$-conjugation action on $W$ defined by $w \cdot_\d w'=w w' \d(w) \i$. 

Following \cite[\S 3.2]{GP00}, we define the {\it $\d$-conjugacy graph} as follows. It is the direct graph with vertices in $W$ and the edges are of the form $w \xrightarrow{s}_\d w'$, where $w, w' \in W$, $s \in S$ such that $w'=s w \d(s)$ and $\ell(w') \le \ell(w)$. 

We denote by $\to_\d$ the pre-order relation induced on $W$ by
the $\d$-conjugacy graph. In other words, for $w, w' \in W$, $w \to_\d w'$ if there exists a sequence of elements $w=w_1, w_2, \ldots, w_n=w$ and $s_1, \ldots, s_{n-1} \in S$ such that $w_i \xrightarrow{s_i}_\d w_{i+1}$ for $1 \le i \le n-1$. 

We write $w \approx_\d w'$ if $w \to_\d w'$ and $w' \to_\d w$. It is easy to see that $\approx_\d$ gives an equivalence relation on $W$. For $w \in W$, we write $\text{Cyc}_\d(w)=\{w' \in W; w \approx_\d w'\}$ and call it the {\it $\d$-cyclic shift class } of $w$. 

If $\d$ is the identity map, we may ignore the subscript $\d$. 

Now we give an example of the conjugacy graph. 

\begin{example}\label{eg:1}
Let $W=S_4$ be the finite Weyl group of type $A_3$ with the set of simple reflections $S=\{s_1, s_2, s_3\}$. We simply write $s_{a b c \cdots}$ instead of $s_a s_b s_c \cdots$. It is easy to see that the conjugacy class of $s_1 s_2 s_3$ forms a connected component of the conjugacy graph. This connected component is listed below. 
\[
\xymatrix{
& s_{12132} \ar[ld]_{s_2} \ar[r]^{s_1, s_3} & s_{23212} \ar[rd]^{s_2} \ar[l] & \\
s_{123} \ar@/^1pc/[r]_{s_1} \ar@/_1pc/[rr]_{s_3} & s_{213} \ar@/^1pc/[rr]^{s_3} & s_{132} \ar@/^1pc/[ll] \ar@/_1pc/[r]_{s_1} & s_{321} \ar@/^1pc/[l] \ar@/_1pc/[ll].
}
\]
\end{example}

\subsection{Geometric cyclic shifts} Let $\bG$ be a connected reductive group over an algebraically closed field $\kk$. Let $G=\bG(\kk)$ and $B$ be a Borel subgroup of $G$ with maximal torus $T$ and unipotent radical $U$. Let $\CR$ be the root datum of $G$ and $\d$ be an automorphism of $\CR$. In particular, $\d(B)=B$. We still denote by $\d$ for the corresponding automorphism on $G$ and on its Weyl group $W=N_G(T)/T$. For any $w \in W$, we choose a representative $\dot w$ in $N_G(T)$. 

Consider the $\d$-conjugation action of $G$ defined by $g \cdot_\d g'=g g' \d(g) \i$. Let $\frac{G}{\Ad_\d(G)}$ be the quotient stack. We have the Bruhat decomposition $G=\sqcup_{w \in W} B \dot w B$. Each Bruhat cell $B \dot w B$ is stable under the $\d$-conjugation action by $B$. Let $\frac{B \dot w B}{\Ad_\d(B)}$ be the corresponding quotient stack and $$\pi_w: \frac{B \dot w B}{\Ad_\d(B)} \to \frac{G}{\Ad_\d(G)}$$ be the natural morphism. 

On the other hand, we have the automorphism on $T$ defined by $t \mapsto \dot w \d(t) \dot w \i$. Let $\frac{T}{\Ad_{\dot w \circ \d}(T)}$ be the quotient stack and $$p_w: \frac{B \dot w B}{\Ad_\d(B)} \to \frac{T}{\Ad_{\dot w \circ \d}(T)}$$ be the natural morphism. 

We have the following (upgraded) geometric version of cyclic shifts. 

\begin{proposition}\label{prop:cyc}
Let $w, w' \in W$. Suppose that $w \approx_\d w'$. Then there exist isomorphisms $\frac{B \dot w B}{\Ad_\d(B)} \cong \frac{B \dot w' B}{\Ad_\d(B)}$ and $\frac{T}{\Ad_{\dot w \circ \d}(T)} \cong  \frac{T}{\Ad_{\dot w' \circ \d}(T)}$ such that the following diagram commutes
\[
\xymatrix{
\frac{T}{\Ad_{\dot w \circ \d}(T)} \ar@{-->}[d]^\cong & \frac{B \dot w B}{\Ad_\d(B)} \ar[l]_-{p_w} \ar[r]^-{\pi_{w}} \ar@{-->}[d]^\cong & \frac{G}{\Ad_\d(G)} \ar@{=}[d] \\
\frac{T}{\Ad_{\dot w' \circ \d}(T)} & \frac{B \dot w' B}{\Ad_\d(B)} \ar[l]_-{p_{w'}} \ar[r]^-{\pi_{w'}} & \frac{G}{\Ad_\d(G)}.
}
\]
\end{proposition}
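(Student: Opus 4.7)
The plan is to reduce to a single elementary cyclic shift and then to build the isomorphism explicitly in that atomic case.

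By transitivity of $\approx_\d$, it suffices to treat a single edge $w \xrightarrow{s}_\d w'$ in the $\d$-conjugacy graph. Because $w \approx_\d w'$ requires the reverse arrow as well, we automatically get $\ell(w) = \ell(w')$ and $w' = s w \d(s)$. After swapping the roles of $w$ and $w'$ if necessary, I may assume $\ell(sw) = \ell(w)+1$, which forces $\ell(w'\d(s)) = \ell(w')+1$. Choose representatives in $N_G(T)$ so that $\dot s\dot w = \dot{w'}\dot{\d(s)}$, and let $C := B\dot s\dot w B = B\dot{w'}\dot{\d(s)}B$ be the common ``long'' Bruhat cell.

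Consider the two Bott--Samelson multiplication maps
\[
\mu : B\dot s B \times^B B\dot w B \longrightarrow C, \qquad \mu' : B\dot{w'}B \times^B B\dot{\d(s)}B \longrightarrow C.
\]
Both are isomorphisms by length-additivity (a standard BN-pair fact), and both are equivariant for the natural $\Ad_\d(B)$-action in which $b \in B$ acts on the source of each map by $[g_1, g_2] \mapsto [b g_1, g_2\d(b)^{-1}]$. Hence $\mu'^{-1}\circ \mu$ is an $\Ad_\d(B)$-equivariant isomorphism of convolutions. Using the canonical factorizations $B\dot s B = U_{\a_s}\dot s B$ and $B\dot{\d(s)}B = B\dot{\d(s)}U_{\a_{\d(s)}}$ to strip the outer $B$-factors on each side, this descends to an $\Ad_\d(B)$-equivariant isomorphism $\Phi : B\dot w B \xrightarrow{\sim} B\dot{w'}B$, and hence to the required isomorphism of quotient stacks.

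For the torus square, the required isomorphism $T \xrightarrow{\sim} T$ is conjugation by $\dot s$: from $\dot s\dot w = \dot{w'}\dot{\d(s)}$ one computes $\dot{w'}\d(\dot s t \dot s^{-1})\dot{w'}^{-1} = \dot s(\dot w\d(t)\dot w^{-1})\dot s^{-1}$ for all $t \in T$, which is exactly the compatibility needed for the left square to commute through $p_w, p_{w'}$. The right square commutes automatically, because $\pi_w$ and $\pi_{w'}$ are the forgetful maps into $\frac{G}{\Ad_\d(G)}$ and by construction $\Phi(g)$ is $\Ad_\d(G)$-conjugate to $g$. The main technical obstacle is the verification that $\mu'^{-1}\circ \mu$ genuinely descends to an $\Ad_\d(B)$-equivariant isomorphism of the middle Bruhat cells: the canonical factorizations used to strip the outer $B$-factors are not themselves $\Ad_\d(B)$-equivariant in the naive sense, so one has to track how the $\Ad_\d(B)$-action interacts with the BN-pair relations for the adjacent Borels $B$, $\dot s B\dot s^{-1}$, $\dot{\d(s)}B\dot{\d(s)}^{-1}$. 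Once this elementary case is settled, chaining the isomorphisms along the finitely many edges realizing $w \approx_\d w'$ finishes the proof.
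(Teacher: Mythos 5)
Your reduction to a single edge and your analysis of the torus square are both sound, and the reduction to the case where exactly one of $\ell(sw)\lessgtr\ell(w)$ holds is justified (the case where both hold forces $sw\d(s)=w$). But there is a genuine gap in the core step, and it comes from your choice of direction. You reduce to $\ell(sw)=\ell(w)+1$, which forces you to embed both $B\dot w B$ and $B\dot{w'}B$ as \emph{factors} of the bigger convolution $C=B\dot s B\times^B B\dot w B=B\dot{w'}B\times^B B\dot{\d(s)}B$. The map $\mu'^{-1}\circ\mu$ is then an $\Ad_\d(B)$-equivariant isomorphism between two presentations of $C$, a variety of dimension $\ell(w)+1+\dim B$, and you still need to extract from it an isomorphism of the quotient stacks $\frac{B\dot wB}{\Ad_\d(B)}\cong\frac{B\dot{w'}B}{\Ad_\d(B)}$, which live one dimension lower. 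The ``stripping'' you propose via $B\dot s B=U_{\a_s}\dot s B$ is not $\Ad_\d(B)$-equivariant: for $b\in U$ the twisted conjugation $b(u\dot s g)\d(b)^{-1}$ scrambles the factors, since $bub^{-1}\notin U_{\a_s}$ in general, so the product decomposition $C\cong U_{\a_s}\dot s\times B\dot wB$ is not preserved. You flag this yourself as an unresolved ``main technical obstacle,'' and indeed I do not see how to construct the claimed $\Ad_\d(B)$-equivariant $\Phi:B\dot wB\to B\dot{w'}B$ of \emph{spaces} (the paper's argument does not produce one either; it only produces an isomorphism of quotient stacks).

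The fix is to reduce to the \emph{opposite} length inequality, $sw<w$, as the paper does. Then with $w_1=sw$ one has $\ell(w_1)=\ell(w)-1$, $w'=w_1\d(s)$ with $\ell(w')=\ell(w_1)+1$, and the cell $B\dot wB$ \emph{itself} is the convolution $B\dot sB\times^B B\dot{w_1}B$, while $B\dot{w'}B=B\dot{w_1}B\times^B B\d(\dot s)B$. The cyclic exchange $(g_1,g_2)\mapsto(g_2,\d(g_1))$ on the product $B\dot sB\times B\dot{w_1}B$ is directly compatible with the combined action of the inner $B$ (defining $\times^B$) and the outer $\Ad_\d(B)$ — it intertwines $(b_{\mathrm{in}},b_{\mathrm{out}})$ on the source with $(\d(b_{\mathrm{out}}),b_{\mathrm{in}})$ on the target — so it descends to the desired isomorphism of quotient stacks with no extraction needed, and $g_1g_2$ and $g_2\d(g_1)$ visibly lie in the same $\d$-conjugacy class of $G$, giving commutativity of the right square. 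In short: your convolution idea is the right one, but you should decompose the cell you start with rather than enlarge it.
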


\begin{remark}
(1) An isomorphism $\frac{B \dot w B}{\Ad_\d(B)} \to \frac{B \dot w' B}{\Ad_\d(B)}$ was constructed in \cite[\S 3.4]{HL}. We follow the proof in loc.cit. 

(2) The isomorphisms $\frac{B \dot w B}{\Ad_\d(B)} \cong \frac{B \dot w' B}{\Ad_\d(B)}$ and $\frac{T}{\Ad_{\dot w \circ \d}(T)} \cong \frac{T}{\Ad_{\dot w' \circ \d}(T)}$ we construct can be regarded as the geometric lifting of the path of cyclic shifts $w=w_1 \xrightarrow{s_1} \cdots \xrightarrow{s_{n-1}} w_n=w'$. Such isomorphisms depend on the choice of such paths, and there seem to be no canonical choices of isomorphisms. 
\end{remark}

\begin{proof}
It suffices to consider the case where $w \xrightarrow{s} w'$. In this case, $\ell(w')=\ell(w)$. It is easy to see that if $s w>w$ and $w \ds(s)>w$, then $w'=w$. Without loss of generality, we may assume furthermore that $s w<w$. Set $w_1=s w$. Then $w_1 \d(s)=w'$. We have $$B \dot s B \times^B B \dot w_1 B \cong B \dot w B,$$ where $B$ acts on $B \dot s B \times B \dot w_1 B$ via $b \cdot (g_1, g_2)=(g_1 b \i, b g_2)$ and $B \dot s B \times^B B \dot w_1 B$ is the quotient space. 
Similarly, we have $$B \dot w_1 B \times^B B \d(\dot s) B \cong B \dot w' B.$$

We have a natural isomorphism $$B \dot s B \times B \dot w_1 B \cong B \dot w_1 B \times B \d(\dot s) B, \qquad (g_1, g_2) \mapsto (g_2, \d(g_1)).$$ By definition, $g_1 g_2$ and $g_2 \d(g_1)$ are in the same $\d$-conjugacy class of $G$. This isomorphism induces the desired isomorphism $\frac{B \dot w B}{\Ad_\d(B)} \cong \frac{B \dot w' B}{\Ad_\d(B)}$. 

Note that $\Ad(\dot s) \circ \Ad(\dot w) \circ \d=\Ad(\dot w') \circ \d \circ \Ad(\dot s)$ on $T$. The desired isomorphism  $\frac{T}{\Ad_{\dot w \circ \d}(T)} \cong  \frac{T}{\Ad_{\dot w' \circ \d}(T)}$ is induced from the conjugation action by $\dot s$. 
\end{proof}

\begin{corollary}
Let $w, w' \in W$ with $w \approx_\d w'$. Let $f: \frac{T}{\Ad_{\dot w \circ \d}(T)} \cong  \frac{T}{\Ad_{\dot w' \circ \d}(T)}$ be an isomorphism in Proposition \ref{prop:cyc}. Then we have $$(\pi_w)_! (p_w)^*=(\pi_{w'})_! (p_{w'})^* f: \text{Sh}(\frac{T}{\Ad_{\dot w \circ \d}(T)}) \to \text{Sh}(\frac{G}{\Ad_\d(G)}).$$ Here $\text{Sh}(-)$ denotes the derived category of complexes of sheaves. 
\end{corollary}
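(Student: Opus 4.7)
The plan is to reduce the claimed equality of functors to a formal diagram chase on the commutative diagram supplied by Proposition \ref{prop:cyc}. Let $g\colon \frac{B \dot w B}{\Ad_\d(B)} \to \frac{B \dot w' B}{\Ad_\d(B)}$ denote the middle vertical isomorphism in that diagram. Commutativity is precisely the pair of identities
\[
\pi_w = \pi_{w'} \circ g \qquad \text{and} \qquad f \circ p_w = p_{w'} \circ g.
\]
The symbol ``$f$'' appearing on the right-hand side of the corollary is to be read as the functor $f_!$ (equivalently $f_*$, equivalently $(f^{-1})^*$, since $f$ is an isomorphism of stacks).

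From $\pi_w = \pi_{w'} \circ g$ I get $(\pi_w)_! = (\pi_{w'})_! \circ g_!$ by functoriality of proper pushforward. From $f \circ p_w = p_{w'} \circ g$ I get a commutative square whose two vertical maps $f$ and $g$ are isomorphisms; such a square is automatically Cartesian, and proper base change (trivial in this situation, since one side is already an equivalence) yields the interchange $g_! \circ (p_w)^* = (p_{w'})^* \circ f_!$. Combining these,
\[
(\pi_{w'})_! (p_{w'})^* f_! \;=\; (\pi_{w'})_! g_! (p_w)^* \;=\; (\pi_w)_! (p_w)^*,
\]
which is the claimed equality of functors $\text{Sh}(\frac{T}{\Ad_{\dot w \circ \d}(T)}) \to \text{Sh}(\frac{G}{\Ad_\d(G)})$.

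The argument is purely formal once the diagram of Proposition \ref{prop:cyc} is in hand, so I anticipate no substantive obstacle. The only point to pin down is the interpretation of ``$f$'' as an operation on derived categories, and this is unambiguous precisely because $f$ is an isomorphism; the base-change identity $g_!(p_w)^* = (p_{w'})^* f_!$ is then just the relation $(f \circ p_w)^* = (p_{w'} \circ g)^*$ rewritten with the inverses of $f$ and $g$.
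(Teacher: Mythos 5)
Your proof is correct and is precisely the formal diagram chase that the paper leaves implicit (the corollary is stated without proof as an immediate consequence of Proposition~\ref{prop:cyc}). The appeal to base change for the left square is harmless but overkill, as you yourself note at the end: since $f$ and $g$ are isomorphisms, $g_!(p_w)^* = (p_{w'})^* f_!$ follows directly from rewriting $f \circ p_w = p_{w'} \circ g$ as $p_w \circ g^{-1} = f^{-1} \circ p_{w'}$ and applying $(-)^*$.
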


\section{Parabolic character sheaves}

\subsection{Partial conjugation on $W$}\label{sec:partial} Let $J \subset S$. Recall that $W_J$ is the subgroup of $W$ generated by the simple reflections in $J$. Let $W^J$ (resp. ${}^J W$) be the set of minimal coset representatives in $W/W_J$ (resp. $W_J \backslash W$). For $J, K \subset S$, we simply write ${}^J W^K$ for ${}^J W \cap W^K$. 

Consider the $\d$-conjugation action of $W_J$ on $W$ defined by $w \cdot_\d w'=w w' \d(w) \i$ for $w \in W_J$ and $w' \in W$. For $w \in W$, we write $\Ad(w) \d(J)=J$ if for any simple reflection $s \in J$, there exists a simple reflection $s' \in J$ such that $w \d(s) w \i=s'$. In this case, $w \in {}^J W$ if and only if $w \in W^{\d(J)}$. 

For any $J \subset S$ and $w \in W$, we set $$I(J, w, \d)=\max\{K \subset J; \Ad(w) \d(K)=K\}.$$ 

Now we recall B\'edard's description \cite{Be} of ${}^J W$. 

Let $\CT(J, \d)$ be the set of sequences $(J_n, w_n)_{n \ge 0}$ with $J_n \subset J$ and $w_n \in W$ such that
\begin{enumerate}
    \item $J_0=J$; 
    
    \item $J_n=J_{n-1} \cap \Ad(w_{n-1}) (\d(J_{n-1}))$ for $n \ge 1$; 
    
    \item $w_n \in {}^{J_n} W^{\d(J_n)}$ for $n \ge 0$; 
    
    \item $w_n \in W_{J_{n}} w_{n-1} W_{\d(J_{n-1})}$ for $n \ge 1$. 
\end{enumerate}

Then for any sequence $(J_n, w_n)_{n \ge 0}$, we have that $w_m=w_{m+1}=\cdots$ and $J_m=J_{m+1}=\cdots$ for $m \ge 0$. By \cite[Proposition 2.5]{Lu1}, the assignment $(J_n, w_n)_{n \ge 0} \mapsto w_m$ for $m \gg 0$ defines a bijection $\CT(J, \d) \to {}^J W$.
Moreover, $w_n \in w W_{\d(J_n)}$ for all $n \ge 0$ and $I(J, w, \d)=J_m$ for $m \gg 0$. 

We have the following description of $W_{I(J, w, \d)}$. 

\begin{lemma}\label{lem:J-inf}
Let $w \in {}^J W$. Then $W_{I(J, w, \d)}=\bigcap_{n \in \BZ} (\Ad(w) \circ \d)^n (W_J)$. 
\end{lemma}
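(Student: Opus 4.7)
Write $\phi = \Ad(w) \circ \d$, set $K = I(J,w,\d)$, and denote the right-hand side by $H = \bigcap_{n \in \BZ} \phi^n(W_J)$. The goal is to prove $H = W_K$. The plan is to establish the two inclusions separately, using B\'edard's sequence $(J_n, w_n)_{n \ge 0}$ attached to $w$, which stabilizes at $J_m = K$ and $w_m = w$ for all $m \gg 0$.

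The inclusion $W_K \subset H$ is immediate. By definition of $K$ we have $\Ad(w)\d(K) = K$, so $W_K$ is $\phi$-stable, and hence $W_K = \phi^n(W_K) \subset \phi^n(W_J)$ for every $n \in \BZ$.

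For the reverse inclusion, I would prove by induction on $n \ge 0$ that $H \subset W_{J_n}$; taking $n \ge m$ then yields $H \subset W_K$. The base case $n=0$ is trivial. For the inductive step the key input is the identity
\[
W_{J_{n+1}} \;=\; W_{J_n} \cap w_n W_{\d(J_n)} w_n^{-1},
\]
which follows from the standard fact that $W_J \cap v W_L v^{-1} = W_{J \cap vLv^{-1}}$ whenever $v \in {}^JW^L$, applied with $v = w_n \in {}^{J_n}W^{\d(J_n)}$. Granted the inductive hypothesis $H \subset W_{J_n}$, it remains to show $H \subset w_n W_{\d(J_n)} w_n^{-1}$. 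Since $H$ is $\phi$-stable, every $y \in H$ has the form $y = \phi(y') = w\d(y')w^{-1}$ for some $y' \in H \subset W_{J_n}$, so $w^{-1} y w = \d(y') \in W_{\d(J_n)}$; hence $w^{-1} H w \subset W_{\d(J_n)}$. Writing $w_n = w b_n$ with $b_n \in W_{\d(J_n)}$ (one of the recorded properties of B\'edard's sequence), we obtain
\[
w_n^{-1} H w_n \,=\, b_n^{-1}(w^{-1} H w) b_n \,\subset\, b_n^{-1} W_{\d(J_n)} b_n \,=\, W_{\d(J_n)},
\]
as needed.

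The main obstacle is the intersection formula $W_{J_n} \cap w_n W_{\d(J_n)} w_n^{-1} = W_{J_{n+1}}$: this is the only nontrivial Coxeter-theoretic input, and it depends crucially on $w_n$ being the shortest element in its $(W_{J_n}, W_{\d(J_n)})$-double coset, a property built into the definition of B\'edard's sequence. Once this is in place, the induction proceeds mechanically from the $\phi$-stability of $H$ and the factorization $w_n = w b_n$.
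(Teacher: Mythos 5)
Your proof is correct and follows essentially the same route as the paper's: the easy inclusion from the $\phi$-stability of $W_K$, and the reverse inclusion by induction on B\'edard's sequence using the $\phi$-stability of $H$ together with the factorization $w_n \in w W_{\d(J_n)}$. The only cosmetic difference is that the paper writes $w = w_n \d(a)$ and manipulates $\Ad(w)\d(W_{J_n}) = \Ad(w_n)\d(W_{J_n})$ directly, whereas you conjugate $H$ by $w_n$; the underlying computation, and the appeal to the Kilmoyer-type intersection formula that you rightly flag as the key Coxeter input, are identical.
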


\begin{proof}
By definition, $\Ad(w) \circ \d(W_{I(J, w, \d)})=W_{I(J, w, \d)} \subset W_J$. 

Let $(J_n, w_n)_{n \ge 0}$ be the sequence in $\CT(J, \d)$ corresponding to $w$. Now we prove that $\cap_{n \in \BZ} (\Ad(w) \circ \d)^n (W_J) \subset W_{J_n}$ for all $n$. 

By definition $\cap_{n \in \BZ} (\Ad(w) \circ \d)^n (W_J) \subset W_J=W_{J_0}$. Assume that we have proved $\cap_{n \in \BZ} (\Ad(w) \circ \d)^n (W_J) \subset W_{J_i}$ for some $i$. By definition, $w=w_i \d(a)$ for some $a \in W_{J_i}$. Hence 
\begin{align*}
    & \cap_{n \in \BZ} (\Ad(w) \circ \d)^n (W_J) \\ &=\bigl(\cap_{n \in \BZ} (\Ad(w) \circ \d)^n (W_J) \bigr) \cap \Ad(w) \circ \d \bigl(\cap_{n \in \BZ} (\Ad(w) \circ \d)^n (W_J) \bigr) \\ & \subset W_{J_i} \cap \Ad(w) \circ \d(W_{J_i})=W_{J_i} \cap \Ad(w_i) \circ \d(W_{J_i}) \\ &=W_{J_{i+1}}.
\end{align*}

This finishes the proof. 
\end{proof}

We have the following classification of the $\Ad_\d(W_J)$-orbits on $W$. 

\begin{proposition}\label{prop:w} Let $J \subset S$. Then 

(1) $W=\bigsqcup_{w \in {}^J W} W_J \cdot_\d (W_{I(J, w, \d)} w)$;

(2) For any $w \in {}^J W$, the embedding $W_{I(J, w, \d)} \to W$, $u \mapsto u w$ induces the bijection between the quotient stacks $$\frac{W_{I(J, w, \d)}}{\Ad_{w \circ \d}(W_{I(J, w, \d)})} \cong \frac{W_J \cdot_\d (W_{I(J, w, \d)} w)}{\Ad_\d(W_J)}.$$
\end{proposition}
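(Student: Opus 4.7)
My plan is to prove $(2)$ first, which reduces to a direct computation plus one substantive application of Lemma~\ref{lem:J-inf}, and then to obtain $(1)$ by combining that analysis with a B\'edard-style induction on $|J|$. For $(2)$, I will verify equivariance of the embedding $\phi\colon W_{I(J,w,\d)}\to W$, $u\mapsto uw$:
\[
(v\cdot_{w\circ\d} u)\,w \;=\; v\,u\,\bigl(w\d(v)w\i\bigr)\i\,w \;=\; vuw\d(v)\i \;=\; v\cdot_\d(uw),
\]
which shows that $\phi$ descends to a well-defined map of orbit spaces; surjectivity onto $\bigl(W_J\cdot_\d(W_{I(J,w,\d)}w)\bigr)/\Ad_\d(W_J)$ is then immediate from the definition of the target. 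The crux is injectivity: given $a\in W_J$ with $au_1 w\d(a)\i=u_2 w$ for some $u_1,u_2\in W_{I(J,w,\d)}$, I aim to show that $a\in W_{I(J,w,\d)}$, so that $v:=a$ realizes the conjugation inside $W_{I(J,w,\d)}$. Setting $\sigma=\Ad(w)\circ\d$, the relation reads $\sigma(a)=u_2\i a u_1$; iterating $\sigma^{\pm 1}$ and using that $\sigma$ stabilizes $W_{I(J,w,\d)}$, one inductively expresses $\sigma^n(a)$ as a product of $a$ with iterates of $u_1,u_2\in W_{I(J,w,\d)}\subset W_J$, hence $\sigma^n(a)\in W_J$ for every $n\in\BZ$. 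Lemma~\ref{lem:J-inf} then forces $a\in\bigcap_n\sigma^n(W_J)=W_{I(J,w,\d)}$.

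For $(1)$, disjointness of the pieces coming from $w$'s lying in different $(W_J,W_{\d(J)})$-double cosets is automatic, since each $\Ad_\d(W_J)$-orbit is contained in a single double coset. The remaining coverage and disjointness inside a fixed double coset $W_Jw_0W_{\d(J)}$ (with $w_0\in{}^JW^{\d(J)}$ the minimal representative) I plan to handle by induction on $|J|$: the double coset is parametrized as $W_J\times_{W_{J_1}}W_{\d(J)}$ with $J_1=J\cap\Ad(w_0)\d(J)$, and choosing representatives of the form $(e,b)\in W_J\times W_{\d(J)}$ shows that the $\Ad_\d(W_J)$-orbits on $W_Jw_0W_{\d(J)}$ correspond to the $W_{\d(J_1)}$-conjugacy orbits on $W_{\d(J)}$; via $\d\i$ this becomes the partial conjugacy decomposition of $W_J$ under $W_{J_1}$, to which the inductive hypothesis (with strictly smaller index set $J_1\subsetneq J$) applies, except in the degenerate case $w_0=e$ and $\d(J)=J$, where $J_1=J$ but $W_{I(J,e,\d)}=W_J$ and the statement of $(1)$ becomes tautological.

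The main obstacle is the disjointness within a single $(W_J,W_{\d(J)})$-double coset: the ``stay inside $W_Jw$'' iteration from $(2)$ no longer closes up when $w\ne w'$, because such a conjugation crosses distinct left $W_J$-cosets and the iterates of $\sigma$ leave $W_J$. Pushing through the B\'edard induction requires matching the sequences of $w$ and $w'$ from the first refinement $w_0\leadsto w_1$ onward and organizing the resulting smaller-index decomposition of $W_J$ into the $w$-indexed pieces of the original statement; this is the technical heart of the proof, and where one may alternatively invoke the companion result in \cite{He07}.
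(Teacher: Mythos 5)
Your argument for part (2) is correct and takes a route genuinely different from the paper's, arguably cleaner. The paper cites \cite{He07} (Cor.~2.6) for part (1) and for the bijection of orbits in part (2), and then proves the stabilizer identification by running the B\'edard sequence $(J_n, w_n)$ directly, showing inductively that the conjugating element $a$ lies in $W_{J_n}$ for every $n$. You instead set $\sigma=\Ad(w)\circ\d$, read the relation $au_1w\d(a)\i=u_2w$ as $\sigma(a)=u_2\i a u_1$ with $u_1,u_2\in W_{I(J,w,\d)}$, iterate $\sigma^{\pm1}$ (using that $\sigma$ preserves $W_{I(J,w,\d)}$) to get $\sigma^n(a)\in W_{I(J,w,\d)}\,a\,W_{I(J,w,\d)}\subset W_J$ for all $n\in\BZ$, and then invoke Lemma~\ref{lem:J-inf} to conclude $a\in\bigcap_n\sigma^n(W_J)=W_{I(J,w,\d)}$. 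This single argument simultaneously yields the bijection of orbits and the isomorphism of stabilizers, whereas the paper treats the stabilizer claim as the only new content. The underlying mechanism is of course the same B\'edard tower, but your invocation of Lemma~\ref{lem:J-inf} packages it more tightly.

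For part (1) you sketch an induction on $|J|$ that essentially re-derives \cite{He07} (Cor.~2.6), which the paper simply cites. The overall shape is right: reduce to a single $(W_J,W_{\d(J)})$-double coset and pass to a smaller partial-conjugation problem. But the identification of the residual action as ``$W_{\d(J_1)}$-conjugacy on $W_{\d(J)}$'' is not quite right as stated. Choosing representatives $(e,b)$, the residual $W_{J_1}$-action on $b$ is $b\mapsto \Ad(w_0)\i(v)\,b\,\d(v)\i$: left multiplication is by the copy $W_{\Ad(w_0)\i(J_1)}$ and right multiplication by $W_{\d(J_1)}$, and these two subgroups of $W_{\d(J)}$ need not coincide, so it is not honest $\d$-conjugation by $W_{\d(J_1)}$; the natural twist that appears is $\d\circ\Ad(w_0)$ rather than $\d$, which does not preserve $W_{\d(J)}$ in general. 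Getting this induction to close up (and to match the resulting pieces with the $w$-indexed pieces of the statement) is, as you say, the technical heart, and the cleanest course is the one the paper takes, namely citing \cite{He07} for both (1) and the orbit bijection and supplying only the stabilizer argument.
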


\begin{proof}
Part (1) and the bijection of orbits in part (2) were proved in \cite[Corollary 2.6]{He07}. It remains to show that the bijection on the orbits also gives an isomorphism between the isotropy groups. In other words, it remains to show that if $(a, b) \in W_J \times W_{I(J, w, \d)} w$ with $a b \d(a) \i \in W_{I(J, w, \d)} w$, then $a \in W_{I(J, w, \d)}$. 

Let $(J_n, w_n)_{n \ge 0}$ be the element in $\CT(J, \d)$ corresponding to $w$. We argue by induction that $a \in W_{J_n}$ for all $n$. 

By definition, $a \in W_J=W_{J_0}$. Assume that $a \in W_{J_i}$ for some $i$. Then $W_{I(J, w, \d)} w \subset w_i W_{\d(J_i)}$ and $a b \d(a) \i \in a w_i W_{\d(J_i)}$. Thus $$a \in W_{J_i} \cap w_i W_{\d(J_i)} w_i \i=W_{J_{i+1}}.$$

Hence $a \in W_{J_n}$ for all $n \ge 0$. In particular, $a \in W_{I(J, w, \d)}$. 
\end{proof}

\subsection{Lusztig's variety $Z_{J, \d}$}\label{sec:zjw}
For any $J \subset S$, let $P_J \supset B$ be the standard parabolic subgroup of type $J$. Let $\CP_J=G/P_J$ be the partial flag variety. We may identify $\CP_J$ with the set of parabolic subgroups of $G$ that are conjugate to $P_J$. For any parabolic subgroup $P$ of $G$, we denote by $U_P$ its unipotent radical. For $g \in G$ and $H \subset G$, we simply write ${}^g H$ for $g H g \i$. Following \cite{Lu1}, we set $$Z_{J, \d}=\{(P, P', g \d(U_P)); P \in \CP_J, P' \in \CP_{\d(J)}, g \in G, {}^g \d(P)=P'\}.$$ Define the action of $G \times G$ on $Z_{J, \d}$ by $$(g_1, g_2) \cdot (P, P', g \d(U_P))=({}^{g_2} P, {}^{g_1} P', g_1 g \d(g_2) \i \, \d(U_{{}^{g_2} P})).$$ Then $G \times G$ acts transitively on $Z_{J, \d}$. Let $h_{J, \d}=(P_J, P_{\d(J)}, U_{P_{\d(J)}})$ be the base point. Then the isotropy group of $h_{J, \d}$ is $\{(\d(l) u', l u); l \in L_J, u \in U_{P_J}, u' \in U_{P_{\d(J)}}\}$. The map $G \times G \to G, (g, g') \mapsto (g') \i g$ induces an isomorphism of stacks $$\frac{Z_{J, \d}}{\D(G)} \cong \CY_{J, \d}.$$
Here $\frac{Z_{J, \d}}{\D(G)}$ is the quotient stack for the diagonal $G$-action on $Z_{J, \d}$ and $\CY_{J, \d}=\frac{U_{P_J} \backslash G/U_{P_{\d(J)}}}{\Ad_\d(L_J)}$ is the quotient stack for the $\d$-conjugation action of $L_J$ on $U_{P_J} \backslash G/U_{P_{\d(J)}}$.

Now we recall the decomposition of $Z_{J, \d}$ into the $G$-stable pieces, introduced by Lusztig in \cite{Lu1}. For $w \in {}^J W$, we set $$Z_{J, \d; w}=G_\D \cdot (B \dot w, B) h_{J, \d}.$$ The following result is established by Lusztig in \cite{Lu1}. 

\begin{proposition}\label{prop:g-stable} Let $J \subset S$. Then

(1) $Z_{J, \d}=\bigsqcup_{w \in {}^J W} Z_{J, \d; w}$ is a decomposition into smooth, locally closed subvarieties. 

(2) For any $w \in {}^J W$, there is a canonical map $$\pi_{J, \d; w}: \frac{Z_{J, \d; w}}{\D(G)} \to \frac{L_{I(J, w, \d)}}{\Ad_{\dot w \circ \d}(L_{I(J, w, \d)})}$$ which is an iterated gerbe for unipotent groups.
\end{proposition}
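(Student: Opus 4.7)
The plan is to work through the isomorphism $Z_{J,\d}/\D(G) \cong \CY_{J,\d}$ established above, under which $Z_{J,\d;w}/\D(G)$ corresponds to the substack $\CY_{J,\d;w} \subset \CY_{J,\d}$ given by the image of the Bruhat cell $B\dot w B \subset G$. The statement then becomes a claim about the decomposition of $\CY_{J,\d} = (U_{P_J}\backslash G/U_{P_{\d(J)}})/\Ad_\d(L_J)$ indexed by ${}^J W$, which parallels and lifts the combinatorial Proposition \ref{prop:w} to the geometric setting.

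For part (1), I would argue exhaustion and disjointness separately. For exhaustion: given any $v \in W$, factor $v = v_1 w$ with $v_1 \in W_J$ and $w \in {}^J W$, so that $\ell(v) = \ell(v_1) + \ell(w)$ and $B\dot v B \subset P_J \dot w B$; modding out $U_{P_J}$ on the left and applying $\Ad_\d(L_J)$ absorbs the $\dot v_1$ factor, identifying the image of $B\dot v B$ with that of $B\dot w B$. For disjointness: the $\Ad_\d(L_J)$-orbits on $U_{P_J}\backslash G/U_{P_{\d(J)}}$ restricted to (unions of) Bruhat strata project to $\Ad_\d(W_J)$-orbits on $W$, so two distinct ${}^J W$-elements lying in the same $\CY_{J,\d}$-piece would have to be $\Ad_\d(W_J)$-conjugate, contradicting the disjoint-union statement of Proposition \ref{prop:w}(1). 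Smoothness and local closedness of each piece are inherited from the corresponding properties of Bruhat cells through the smooth quotient morphism.

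For part (2), I would build $\pi_{J,\d;w}$ iteratively along the B\'edard sequence $(J_n, w_n)_{n\ge 0}$ associated to $w$. At each stage, I expect $\CY_{J_n,\d;w_n}$ to admit a natural projection to the ``refined'' stack $\CY_{J_{n+1},\d;w_{n+1}}$ whose fibers are torsors for a connected unipotent group, concretely the product of root subgroups lying in $L_{J_n}$ but not in $L_{J_{n+1}}$, viewed inside the Bruhat decomposition of $L_{J_n}$ around $\dot w_n$. By Lemma \ref{lem:J-inf} the sequence stabilizes at $J_n = I(J,w,\d)$ and $w_n = w$ for $n \gg 0$, and the final stratum is precisely $L_{I(J,w,\d)}/\Ad_{\dot w\circ\d}(L_{I(J,w,\d)})$. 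The composite is $\pi_{J,\d;w}$, an iterated gerbe for unipotent groups by construction.

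The main obstacle will be the recursive step: carefully pinning down the unipotent subgroup quotiented out at each stage and confirming the fibers are torsors for it. This requires tracking how the $\Ad_\d(L_{J_n})$-action decomposes relative to the Levi $L_{J_{n+1}} \subset L_{J_n}$ and verifying that no non-unipotent contributions are mistakenly absorbed or produced — essentially a root-system computation that must be compatible with the combinatorial behavior encoded by Proposition \ref{prop:w}(2).
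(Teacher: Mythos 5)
The paper does not prove Proposition \ref{prop:g-stable}: it is explicitly attributed to Lusztig's \cite{Lu1} (with \cite[Proposition 1.10(1)]{He07a} cited for the reformulation in Proposition \ref{prop:cyw}). So there is no in-text argument to match against. Your sketch, however, has a concrete gap in the exhaustion step of part~(1).

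You claim that for $v = v_1 w$ with $v_1 \in W_J$, $w \in {}^J W$, and $\ell(v)=\ell(v_1)+\ell(w)$, ``modding out $U_{P_J}$ on the left and applying $\Ad_\d(L_J)$ absorbs the $\dot v_1$ factor, identifying the image of $B\dot vB$ with that of $B\dot wB$.'' This is false. Which piece $\CY_{J,\d;w'}$ absorbs the image of $B\dot v B$ is governed not by the coset representative of $v$ in $W_J\backslash W$ but by the $\Ad_\d(W_J)$-orbit of $v$, via the iterated B\'edard reduction; this is precisely the content of Proposition~\ref{prop:w}(1). A concrete counterexample: take $W = S_3$, $J = \{1\}$, $\d = \id$, $v = s_1s_2$, so $v_1 = s_1$ and $w = s_2$. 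Then $W_J\cdot_\d v = \{s_1s_2,\, s_2s_1\}$, which meets ${}^J W$ in $\{s_2s_1\}$, not $\{s_2\}$, so the corresponding piece is $\CY_{J,\id;\,s_2s_1}$. And one checks directly in $\SL_3$ that no element $l\,u\,\dot s_1\dot s_2\,u'\,l^{-1}$ with $u,u'\in U_{P_J}$, $l\in L_J$ lies in $B\dot s_2 B$ (the first column can never be made to have the form $(\ast,0,0)^T$ with $\ast\neq 0$ without degenerating $l$). The one-step move only places $B\dot vB$ inside $P_J\dot w P_{\d(J)}$, which is itself spread across several pieces; the iterated reduction you deferred to part (2) is already needed in part~(1). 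Relatedly, in part~(2) the intermediate strata should not be denoted $\CY_{J_n,\d;w_n}$, as those objects live on the different ambient spaces $\CY_{J_n,\d}$; Lusztig's iteration stays inside a single chain of quotients of $P_J\dot w P_{\d(J)}$, peeling off a connected unipotent group at each stage, and the real content is the precise identification of those groups. I'd suggest consulting \cite{Lu1} directly rather than trying to reconstruct the argument from the combinatorics alone.
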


Under the isomorphism $\frac{Z_{J, \d}}{\D(G)} \cong \CY_{J, \d}$, we may reformulate the above decomposition as follows. For any $w \in W$, let $\CY_{J, \d;w}$ be the image of $B \dot w B$ in $\CY_{J, \d}$. Then for any $w \in {}^J W$, $\frac{Z_{J, \d; w}}{\D(G)} \cong \CY_{J, \d;w}$ and $\CY_{J, \d}=\sqcup_{w \in {}^J W} \CY_{J, \d;w}$. 

By \cite[Proposition 1.10 (1)]{He07a}, $$Z_{J, \d; w} \cong G \times^{P_{I(J, w, \d)}} (P_{I(J, w, \d)} \dot w, P_{I(J, w, \d)}) \cdot h_{J, \d},$$ where $P_{I(J, w, \d)}$ acts on $G \times (P_{I(J, w, \d)} \dot w, P_{I(J, w, \d)}) \cdot h_{J, \d}$ by $p \cdot (g, z)=(g p \i, (p, p) \cdot z)$ and $G \times^{P_{I(J, w, \d)}} (P_{I(J, w, \d)} \dot w, P_{I(J, w, \d)}) \cdot h_{J, \d}$ is the quotient space. We may reformulate it as follows. 

\begin{proposition}\label{prop:cyw}
Let $w \in {}^J W$. The embedding $P_{I(J, w, \d)} \dot w P_{\d(I(J, w, \d)} \to G$ induces an isomorphism $$\frac{U_{P_J} \backslash P_{I(J, w, \d)} \dot w P_{\d(I(J, w, \d)}/U_{P_{\d(J)}}}{\Ad_\d(L_J \cap P_{I(J, w, \d)})} \cong \CY_{J, \d;w}.$$
\end{proposition}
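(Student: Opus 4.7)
Write $I := I(J, \d, w)$ for brevity. The plan is to transport \cite[Proposition 1.10(1)]{He07a} --- which furnishes the isomorphism $Z_{J, \d; w} \cong G \times^{P_I} Y_w$ with $Y_w := (P_I \dot w, P_I) \cdot h_{J, \d}$ and $P_I$ acting diagonally --- through the isomorphism $\frac{Z_{J, \d}}{\D(G)} \cong \CY_{J, \d}$ recalled in \S\ref{sec:zjw}. Passing to the $\D(G)$-quotient on both sides gives $\CY_{J, \d; w} \cong Y_w / \D(P_I)$, so the task reduces to matching $Y_w / \D(P_I)$ with the quotient stack in the statement.

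To carry this out, I would parametrize $Y_w$ by pairs $(g_1, g_2) \in P_I \dot w \times P_I$ modulo the right action of $\mathrm{Stab}(h_{J, \d}) = \{(\d(l) u', l u): l \in L_J,\ u \in U_{P_J},\ u' \in U_{P_{\d(J)}}\}$ (as listed in \S\ref{sec:zjw}), and then compose with $(g_1, g_2) \mapsto g_2^{-1} g_1$, which implements the stack isomorphism $\frac{Z_{J, \d}}{\D(G)} \cong \CY_{J, \d}$. This map sends $P_I \dot w \times P_I$ onto $P_I \dot w$ and absorbs the diagonal $\D(P_I)$-action entirely, reducing the problem to identifying the residual equivalence on $P_I \dot w$.

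The crux is the following compatibility: the stabilizer element $(\d(l) u', l u)$ keeps $P_I \dot w \times P_I$ stable under right multiplication precisely when $l \in L_J \cap P_I$. One direction uses $U_{P_J} \subset U_{P_I} \subset P_I$, so that the condition $g_2 l u \in P_I$ forces $l \in P_I$. The other uses $\Ad(w) \d(I) = I$, which gives $\dot w^{-1} P_I \dot w = P_{\d(I)}$, together with $U_{P_{\d(J)}} \subset U_{P_{\d(I)}} \subset P_{\d(I)}$, to check that $\d(l) u' \in P_{\d(I)}$ and hence $g_1 \d(l) u' \in P_I \dot w$ as soon as $l \in L_J \cap P_I$. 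Under $(g_1, g_2) \mapsto g_2^{-1} g_1$, the restricted action becomes $x \mapsto u^{-1} l^{-1} x \d(l) u'$, which is exactly left $U_{P_J}$-multiplication combined with right $U_{P_{\d(J)}}$-multiplication and the $\d$-conjugation of $L_J \cap P_I$.

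Finally, the identity $P_I \dot w = P_I \dot w P_{\d(I)}$, immediate from $\dot w P_{\d(I)} \dot w^{-1} = P_I$, rewrites the resulting quotient in the form appearing in the statement, and the isomorphism is by construction induced by the embedding $P_I \dot w P_{\d(I)} \hookrightarrow G$ followed by projection to $\CY_{J, \d}$. I expect the main obstacle to be the bookkeeping in the crux step: correctly pinning down which stabilizer triples $(l, u, u')$ simultaneously preserve both factors of the pair $(P_I \dot w, P_I)$, and verifying that no spurious identifications are introduced beyond the three listed operations on $P_I \dot w$.
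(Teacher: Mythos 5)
Your overall plan — transport \cite[Proposition 1.10(1)]{He07a} through the stack isomorphism $\frac{Z_{J,\d}}{\D(G)}\cong\CY_{J,\d}$, pass to the $\D(G)$-quotient to get $\CY_{J,\d;w}\cong Y_w/\D(P_I)$, and then unwind the latter using the $(g_1,g_2)\mapsto g_2^{-1}g_1$ map — is exactly the reformulation the paper intends. However, the execution of the ``crux'' step rests on a false identity. You assert that $\Ad(w)\d(I)=I$ forces $\dot w^{-1}P_I\dot w=P_{\d(I)}$, and you use this twice: once to conclude that every $(\d(l)u',lu)$ with $l\in L_J\cap P_I$ preserves $P_I\dot w\times P_I$, and once to conclude $P_I\dot w=P_I\dot wP_{\d(I)}$. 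Neither holds. For a concrete counterexample take $W=S_4$, $\d=\id$, $I=\{s_1\}$, $w=s_3$; then $\Ad(s_3)(s_1)=s_1$ and $s_3\in{}^{\{s_1\}}W^{\{s_1\}}$, but $s_3(\alpha_3)=-\alpha_3$ while $\alpha_3\in\Phi(U_{P_{\{s_1\}}})$, so $\dot s_3P_{\{s_1\}}\dot s_3^{-1}\neq P_{\{s_1\}}$. What $\Ad(w)\d(I)=I$ together with $w\in{}^IW^{\d(I)}$ does give is only $\dot wL_{\d(I)}\dot w^{-1}=L_I$ (and compatibility with $B\cap L$), not the equality of full parabolic subgroups.

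As a consequence, the set $P_I\dot w$ is in general a \emph{proper} subset of $P_I\dot wP_{\d(I)}$ (in the example above, of codimension one), and the subgroup of $H=\mathrm{Stab}(h_{J,\d})$ that preserves $P_I\dot w\times P_I$ under right multiplication is a \emph{proper} subgroup of $\{(\d(l)u',lu):l\in L_J\cap P_I\}$, cut out by the additional (and genuinely nontrivial) condition $\d(l)u'\in\dot w^{-1}P_I\dot w$. So your parenthetical identification of the residual equivalence on $P_I\dot w$ is not the one described in the Proposition, and the final ``rewriting'' via $P_I\dot w=P_I\dot wP_{\d(I)}$ does not hold at the level of spaces. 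The statement is nevertheless correct: what saves it is a Morita-type observation you do not make, namely that the group $H_1':=\{(lu,\d(l)u'):l\in L_J\cap P_I, u\in U_{P_J}, u'\in U_{P_{\d(J)}}\}$ acting by $(h_1,h_2)\cdot x=h_1xh_2^{-1}$ satisfies $H_1'\cdot(P_I\dot w)=P_I\dot wP_{\d(I)}$, and any $(h_1,h_2)\in H_1'$ carrying one point of $P_I\dot w$ into $P_I\dot w$ automatically has $h_2\in\dot w^{-1}P_I\dot w$ (since $\dot wh_2^{-1}\dot w^{-1}=p^{-1}h_1^{-1}p'$ for some $p,p'\in P_I$). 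This identifies $\frac{P_I\dot w}{\mathrm{Stab}}$ with $\frac{P_I\dot wP_{\d(I)}}{H_1'}$ even though neither the ambient sets nor the groups coincide. Without this correction the argument as written does not establish the claimed isomorphism.
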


\subsection{Parabolic character sheaves}
We follow \cite{Lu1}. For any $w \in W$, we consider the following diagram 
\[
\xymatrix{
\frac{T}{\Ad_{\dot w \circ \d}(T)} & \frac{B \dot w B}{\Ad_\d(B)} \ar[l]_-{p_{w}} \ar[r]^-{\pi_{J, w}} & \CY_{J, \d}.
}
\]

A parabolic character sheaf on $\CY_{J, \d}$ is a simple perverse sheaf that is a composition factor of ${}^p H^i((\pi_{J, w})_! p_w^* \CL)$ for some $w \in W$, $i \in \BZ$ and $\CL \in \Sh(\frac{T}{\Ad_{\dot w \circ \d}(T)})$. 

On the other hand, by Proposition \ref{prop:g-stable}, for any $w \in {}^J W$, the map $\pi_{J, w}$ induces an equivalence of categories $$\pi_{J, \d; w}^*: \text{Sh}(\frac{L_{I(J, w, \d)}}{\Ad_{\dot w \circ \d}(L_{I(J, w, \d)})}) \cong \text{Sh}(\frac{Z_{J, \d; w}}{\D(G)})=\Sh(\CY_{J, \d;w}).$$

Note that $L_{I(J, w, \d)}$ is a connected reductive group. The character sheaves on $\frac{L_{I(J, w, \d)}}{\Ad_{\dot w \circ \d}(L_{I(J, w, \d)})}$ is defined by Lusztig in \cite{Lu2}. 

It is proved by Lusztig in \cite{Lu1} that 

\begin{theorem} Let $J \subset S$. Then 

(1) Any parabolic character sheaf on $\CY_{J, \d}$ is the intermediate extension of $\pi^*_{J, \d; w}(A)$ for a unique $w \in {}^J W$ and a character sheaf $A$ on $\frac{L_{I(J, w, \d)}}{\Ad_{\dot w \circ \d}(L_{I(J, w, \d)})}$. 

(2) If $B$ is a parabolic character sheaf on $\CY_{J, \d}$ and $w \in {}^J W$, then any composition factor of ${}^p H^i(B \mid_{\CY_{J, \d;w}})$ is of the form $\pi^*_{J, \d; w}(A)$ for some character sheaf $A$ on $\frac{L_{I(J, w, \d)}}{\Ad_{\dot w \circ \d}(L_{I(J, w, \d)})}$. 
\end{theorem}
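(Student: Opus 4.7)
The plan is to reduce the study of a parabolic character sheaf on $\CY_{J,\d}$ to Lusztig's classical theory of character sheaves on the Levi subgroup $L_{I(J,w,\d)}$. The two main tools are the stratification $\CY_{J,\d}=\bigsqcup_{w\in {}^JW}\CY_{J,\d;w}$ from Proposition \ref{prop:g-stable} and the fact that $\pi_{J,\d;w}$ is an iterated unipotent gerbe, so that $\pi^*_{J,\d;w}$ is an equivalence on derived categories of sheaves. The bulk of the work is in part (2); part (1) follows from (2) by a support-and-intermediate-extension argument.

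For part (2), fix $w\in {}^JW$ and write $K=I(J,w,\d)$. Let $B$ be a parabolic character sheaf on $\CY_{J,\d}$, so by definition $B$ is a composition factor of some ${}^pH^i((\pi_{J,w'})_!p_{w'}^*\CL)$ with $w'\in W$ and $\CL\in\Sh(\frac{T}{\Ad_{\dot w'\circ\d}(T)})$. I would first reduce the element $w'$ to lie in $W_{I(J,\widetilde w,\d)}\widetilde w$ for some $\widetilde w\in {}^JW$, in two steps. First, if $w'$ is not of minimal length in its $\Ad_\d(W_J)$-orbit, choose $s\in J$ with $\ell(sw'\d(s))<\ell(w')$ and use the Demazure-type resolution $B\dot s B\times^B B\dot{sw'}B\to \overline{B\dot w'B}$ together with proper base change to express $(\pi_{J,w'})_!p_{w'}^*\CL$ in terms of pushforwards $(\pi_{J,v})_!p_v^*\CL'$ for elements $v$ of strictly smaller length and correspondingly transformed torus local systems $\CL'$. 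This sets up a noetherian induction on $\ell(w')$. Second, once $w'$ is of minimal length in its orbit, Proposition \ref{prop:w}(1) places it in $W_J\cdot_\d(W_{I(J,\widetilde w,\d)}\widetilde w)$ for a unique $\widetilde w\in {}^JW$; Proposition \ref{prop:cyc}, applied inside $W_J$ via a sequence of partial cyclic shifts connecting $w'$ to an element $u\widetilde w\in W_{I(J,\widetilde w,\d)}\widetilde w$, provides a canonical isomorphism of the pushforward diagrams and hence identifies $(\pi_{J,w'})_!p_{w'}^*\CL$ with $(\pi_{J,u\widetilde w})_!p_{u\widetilde w}^*\CL''$.

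With $w'=u\widetilde w$ in hand, the support of $(\pi_{J,u\widetilde w})_!p_{u\widetilde w}^*\CL''$ is contained in $\overline{\CY_{J,\d;\widetilde w}}$. If $\widetilde w\neq w$, then either the restriction to $\CY_{J,\d;w}$ is zero or $\CY_{J,\d;w}\subset \overline{\CY_{J,\d;\widetilde w}}\smallsetminus \CY_{J,\d;\widetilde w}$, in which case we induct on the closure order. If $\widetilde w=w$, the description of $\CY_{J,\d;w}$ in Proposition \ref{prop:cyw} as the quotient of $U_{P_J}\backslash P_K\dot w P_{\d(K)}/U_{P_{\d(J)}}$ by $\Ad_\d(L_J\cap P_K)$, combined with the gerbe property, identifies $(\pi_{J,u w})_!p_{u w}^*\CL''|_{\CY_{J,\d;w}}$ with $\pi^*_{J,\d;w}$ of the analogous construction $(\pi^{L_K}_u)_!(p^{L_K}_u)^*\CL''$ carried out inside the reductive group $L_K$ with respect to $\Ad(\dot w)\circ\d$. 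Lusztig's classical theory \cite{Lu2} applied to $L_K$ then guarantees that every composition factor of ${}^pH^i$ of this object is a character sheaf on $\frac{L_K}{\Ad_{\dot w\circ\d}(L_K)}$, which gives part (2).

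For part (1), a parabolic character sheaf $B$ is a simple perverse sheaf on $\CY_{J,\d}$, so by Proposition \ref{prop:g-stable}(1) and standard closure analysis its support has a unique open stratum $\CY_{J,\d;w}$ for some $w\in {}^JW$. Then $B|_{\CY_{J,\d;w}}$ is simple perverse, and by part (2) equals $\pi^*_{J,\d;w}(A)$ for a unique character sheaf $A$, using that $\pi^*_{J,\d;w}$ is an equivalence on perverse sheaves. Since $B$ has no subquotients supported on a proper closed subset of its support, $B$ is the intermediate extension of $\pi^*_{J,\d;w}(A)$; the uniqueness of the pair $(w,A)$ is immediate. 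The main obstacle will be the length-reduction step in the first stage of part (2): one must verify that each inductive step stays within the framework of pushforwards of torus local systems along $(\pi_{J,\cdot},p_\cdot)$-diagrams and correctly tracks how $\CL$ transforms under the relevant simple-reflection intertwiners, with the compatibility built into Proposition \ref{prop:cyc}. Once this bookkeeping is handled, the gerbe equivalence and the Levi character-sheaf theory dispatch the rest cleanly.
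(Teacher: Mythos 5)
This theorem is not proved in the paper --- the text explicitly prefaces it with ``It is proved by Lusztig in \cite{Lu1},'' so there is no internal argument for you to match. Your sketch does follow the broad strategy one expects from Lusztig's original proof: reduce the defining datum $w'$ using partial cyclic shifts, place it via the $\Ad_\d(W_J)$-orbit decomposition, restrict to $G$-stable pieces via the gerbe $\pi_{J,\d;w}$, and appeal to classical character-sheaf theory on the Levi $L_{I(J,w,\d)}$.

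Two places in your reduction are looser than they should be. First, if $w'$ is not of minimal length in $W_J\cdot_\d w'$, it need not admit an immediate length-decreasing conjugation by some $s\in J$: the result from \cite{He07} recalled in the paper only produces a chain $w'\to_{J,\d}\cdots$ in which equal-length steps and strictly decreasing steps may be interleaved, and the first step may well preserve length. You must handle the equal-length steps via the cyclic-shift isomorphism of Proposition~\ref{prop:cyc} and the strictly decreasing steps via the two-term analysis coming from the product isomorphism $B\dot w'B\cong B\dot sB\times^B B\dot{sw'}B$ followed by the flip $(g_1,g_2)\mapsto(g_2,\d(g_1))$ onto $B\dot{sw'}B\times^B B\d(\dot s)B$, which targets the union of two cells; this is an argument about convolution of Bruhat cells, not a Demazure resolution of $\overline{B\dot w'B}$. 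Second, when $\widetilde w=w$ the stack $\frac{B\dot{uw}B}{\Ad_\d(B)}$ is not contained in $\CY_{J,\d;w}$ but also meets lower strata, so to identify $(\pi_{J,uw})_!p_{uw}^*\CL''|_{\CY_{J,\d;w}}$ with $\pi^*_{J,\d;w}$ of a pushforward inside $L_K$ one must first restrict to the open preimage of $\CY_{J,\d;w}$ and only then apply proper base change; as written you move directly from the full pushforward to the Levi picture. Both gaps are repairable along the lines you indicate, and both are dealt with in Lusztig's proof, but the proposal currently skips over them.
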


\subsection{Partial conjugation graph} Let $J \subset S$. We consider the $\Ad_\d(W_J)$-orbits on $W$. The {\it $(J, \d)$-conjugacy graph}, by definition, is the direct graph with vertices in $W$ and the edges are of the form $w \xrightarrow{s}_\d w'$ for $w, w' \in W$, $s \in J$. 

We denote by $\to_{J, \d}$ the pre-order relation induced by $\xrightarrow{s}_\d$ for $s \in J$. We write $w \approx_{J, \d} w'$ if $w \to_{J, \d} w'$ and $w' \to_{J, \d} w$. We call the equivalece class $\approx_{J, \d}$ the $(J, \d)$-cyclic shift classes on $W$. 

We have the following result. 

\begin{proposition}\cite[Proposition 3.4]{He07}
For any $w \in W$, there exists $w' \in {}^J W$ and $u \in W_{I(J, w', \d)}$ such that $w \to_{J, \d} u w'$. 

Moreover, if $w$ is of minimal length in $W_J \cdot_\d w$, then $u$ is of minimal length in $W_{I(J, w', \d)} \cdot_{\d'} u$ and $w \approx_{J, \d} u w'$, where $\d'=\Ad(w') \circ  \d$. 
\end{proposition}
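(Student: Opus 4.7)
The plan is to proceed by induction on $\ell(w)$, reducing first to the case where $w$ is of minimal length in the $\Ad_\d(W_J)$-orbit $W_J \cdot_\d w$. This initial reduction rests on the standard Coxeter-theoretic fact that if $w$ is not of minimal length in its orbit, then after a sequence of length-preserving $J$-conjugations by simple reflections one can find $s \in J$ with $\ell(s \cdot_\d w) < \ell(w)$. Iterating produces a chain $w \to_{J, \d} w_0$ in the $(J, \d)$-conjugacy graph with $w_0$ of minimal length, so the first part of the proposition reduces to analyzing $w_0$.

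For $w_0$ of minimal length, I write $w_0 = u w'$ with $w' \in {}^J W$ the minimal-length representative of $W_J w_0$ and $u \in W_J$, so that $\ell(w_0) = \ell(u) + \ell(w')$. The key claim is that $u \in W_{I(J, w', \d)}$, which I would prove by contradiction. Let $(J_k, w'_k)_{k \ge 0}$ be the B\'edard sequence associated with $w'$, so that $W_{I(J, w', \d)} = W_{J_m}$ for $m \gg 0$ and the chain $W_{J_0} \supseteq W_{J_1} \supseteq \cdots$ stabilizes there. If $u \notin W_{I(J, w', \d)}$, pick the smallest $k \ge 1$ with $u \notin W_{J_k}$; then $u \in W_{J_{k-1}}$. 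Combining the recursion $J_k = J_{k-1} \cap \Ad(w'_{k-1}) \d(J_{k-1})$ with $w' \in w'_{k-1} W_{\d(J_{k-1})}$ and applying a braid/exchange-condition analysis to a reduced expression of $u$ yields a conjugator $g \in W_J$ with $\ell(g \cdot_\d w_0) < \ell(w_0)$, contradicting the minimality of $w_0$ and establishing the claim.

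For the \emph{moreover} part, assume the original $w$ is already of minimal length in $W_J \cdot_\d w$; then $w_0 = w$, $w = u w'$ with $u \in W_{I(J, w', \d)}$ by the above, and $w \approx_{J, \d} u w'$ holds trivially. To verify that $u$ is of minimal length in $W_{I(J, w', \d)} \cdot_{\d'} u$, note that $\d' = \Ad(w') \circ \d$ preserves $W_{I(J, w', \d)}$ and permutes its simple reflections, and that for any $v \in W_{I(J, w', \d)}$,
\[
v \cdot_\d w = v u w' \d(v)^{-1} = (v u \d'(v)^{-1})\, w' = (v \cdot_{\d'} u)\, w'.
\]
Since $v \cdot_{\d'} u \in W_{I(J, w', \d)} \subseteq W_J$ and $w' \in {}^J W$, we have $\ell(v \cdot_\d w) = \ell(v \cdot_{\d'} u) + \ell(w')$; any length-decreasing $\d'$-conjugation of $u$ would therefore give a strictly shorter element in $W_J \cdot_\d w$, contradicting the minimality of $w$.

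The technical heart of the argument is the length-decreasing construction in the claim proof: producing an explicit $g \in W_J$ that strictly shortens $w_0$ whenever $u$ escapes $W_{I(J, w', \d)}$. This is the main obstacle and requires a delicate interaction between the iterated B\'edard decomposition of $w'$ and the Bruhat order on $W$; it is the central combinatorial content of \cite[Proposition 3.4]{He07}, whose proof the above plan follows.
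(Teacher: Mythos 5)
The paper does not reprove this statement (it cites \cite[Proposition~3.4]{He07}), but the related Theorem~\ref{thm:cyc} and its proof show what the intended argument looks like, and your plan diverges from it at a crucial point. The key claim in your proof --- that if $w_0$ is of minimal length in $W_J \cdot_\d w_0$ and we write $w_0 = u w'$ with $w' \in {}^J W$, $u \in W_J$, then automatically $u \in W_{I(J, w', \d)}$ --- is false. Minimality of $w_0$ does not force the \emph{standard} coset decomposition to land in $W_{I(J, w', \d)} w'$; it only guarantees that \emph{some} element in the same cyclic shift class has that form, and getting to it requires further length-preserving conjugations, not just a decomposition.

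Concretely, take $W = S_4$, $J = \{s_1, s_2\}$, $\d = \id$, and $w_0 = s_2 s_3$. Then $\ell(w_0) = 2$, and conjugating by $s_1$ raises the length to $4$ while conjugating by $s_2$ gives $s_3 s_2$, also of length $2$; so $w_0$ is of minimal length in its $W_J$-orbit. Its decomposition is $w_0 = s_2 \cdot s_3$ with $w' = s_3 \in {}^J W$ and $u = s_2$, but $I(J, s_3, \id) = \{s_1\}$ (since $s_3 s_2 s_3$ is not a simple reflection), so $u = s_2 \notin W_{I(J, w', \d)}$. The element $s_3 s_2$, obtained by the cyclic shift $s_2 s_3 \approx_{J, \id} s_3 s_2$, is the one of the required form ($w' = s_3 s_2 \in {}^J W$, $u = 1$, $I(J, s_3 s_2, \id) = \emptyset$). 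This shows your reduction-by-contradiction cannot close: there is no contradiction with minimality when $u \notin W_{I(J, w', \d)}$. The correct argument is the iterative one used in the proof of Theorem~\ref{thm:cyc}: build the quadruples $(J_n, w_n, x_n, y_n)$ from the B\'{e}dard sequence, cyclically conjugate by the $x_n$ one step at a time, and check that each step is length-nonincreasing (hence, under the minimality hypothesis, length-preserving). The ``moreover'' part of your proof, in particular the assertion that $w \approx_{J, \d} u w'$ ``holds trivially,'' inherits the same gap; the length computation $\ell(v \cdot_\d w) = \ell(v \cdot_{\d'} u) + \ell(w')$ is correct, but it is being applied to a decomposition of $w$ that need not exist.
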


\begin{remark}
By Proposition \ref{prop:w} (1), $w'$ is uniquely determined by $w$. However, $u$ is not unique in general. 
\end{remark}

\subsection{Partial order}\label{sec:partial}
For any $w \in W$ and $w' \in {}^J W$, we write $w' \le_{J, \d} w$ if there exists $u \in W_J$ such that $u w' \d(u) \i \le w$. By \cite[Corollary 4.6]{He07}, 

(a) {\it The restriction of $\le_{J, \d}$ to ${}^J W$ gives a partial order on ${}^J W$.}

It is easy to see that for $w, w' \in {}^J W$, $w' \le w$ implies that $w' \le_{J, \d} w$ and $w' \le_{J, \d} w$ implies that $\ell(w') \le \ell(w)$. However, the converse directions do not hold in general. 

\begin{example}
Let $W=S_4$ and $J=\{3\}$. The simple reflections of $W$ are $s_1, s_2, s_3$. We simply write $s_{a b c \cdots}$ instead of $s_a s_b s_c \cdots$. In Figure \ref{fig1.1}, we draw the Hasse diagram of ${}^J W$, with respect to the usual Bruhat order and the partial order $\le_{J, \id}$ (the extra relation is in dotted line). 
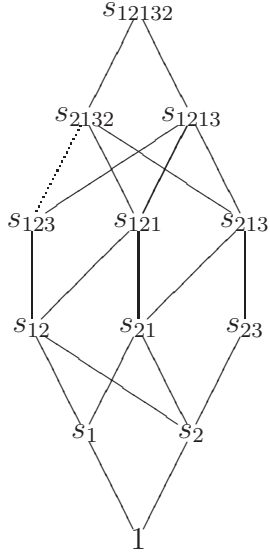
\begin{figure}[h]
  \caption{Hasse diagram for the partial orders on ${}^J W$}
\[
\begin{xy}
0;<0.7cm,0pt>:
(0, 8)*{s_{12132}}="12132";
(-1,6)*{s_{2132}}="2132";
(1,6)*{s_{1213}}="1213";
(-2,4)*{s_{123}}="123";
(0,4)*{s_{121}}="121";
(2,4)*{s_{213}}="213";
(-2,2)*{s_{12}}="12";
(0,2)*{s_{21}}="21";
(2,2)*{s_{23}}="23";
(-1,0)*{s_1}="1";
(1,0)*{s_2}="2";
(0,-2)*{1}="0";
"1" **@{-};
"12" **@{-};
"123" **@{-};
"2132" **@{.};
"12132" **@{-};
"1213" **@{-};
"121" **@{-};
"12" **@{-};
"2" **@{-};
"0" **@{-};
"1";
"21" **@{-};
"121" **@{-};
"2132" **@{-};
"213" **@{-};
"1213" **@{-};
"121" **@{-};
"2";
"21" **@{-};
"213" **@{-};
"23" **@{-};
"2" **@{-};
"123";
"1213" **@{-};
\end{xy}
\]
\label{fig1.1}
\end{figure}
\end{example}

By \cite[Proposition 5.8]{He07}, we have 

(a) {\it 
Let $w \in W$. Then $\overline{\CY_{J, \d;w}}=\bigsqcup_{w' \in {}^J W; w' \le_{J, \d} w} \CY_{J, \d;w'}$.}

Since $\CY_{J, \d;w}$ is irreducible, there exists a unique geometric piece $\CY_{J, \d;w'}$ which is dense in $\overline{\CY_{J, \d;w}}$. Therefore we have

(b) {\it 
For any $w \in W$, the set $\{w' \in W^J; w' \le_{J, \d} w\}$ contains a unique maximal element with respect to $\le_{J, \d}$.}

We also would like to point out the special case of (a), which will be used in \cite{LNY}. 

(c) {\it Let $w \in W$. If $W_J \cdot_\d w \cap {}^J W=\emptyset$ or $w$ is not of minimal length in $W_J \cdot_\d w$, then $\CY_{J, \d;w} \subset \bigsqcup_{w' \in {}^J W; \ell(w')<\ell(w)} \CY_{J, \d;w'}$.}

\section{Cyclic shifts of pieces}

\subsection{Combinatorial pieces} 

A {\it $\d$-combinatorial piece} is a pair $(w, K)$, where $w \in W$ and $K \subset S$ with $w \in {}^K W$ and $\Ad(w) \d(K)=K$ (and hence $w \in W^{\d(K)}$). To each $\d$-combinatorial piece $(w, K)$, we associate a subset $W_K w$ of $W$. In particular, if $K=\emptyset$, then we naturally identify the $\d$-combinatorial piece $(w, \emptyset)$ with the element $w$ of $W$. In this way, we identify $W$ as a subset of the set of $\d$-combinatorial pieces. 

We say that two $\d$-combinatorial pieces $(w, K)$ and $(w', K')$ are $\d$-conjugated by an element $x \in W$ if $w'=x \i w \d(x)$, and $\Ad(x) \i(K)=K'$ (and hence $x \in W^K$). In this case, $x \i (W_K w) \d(x)=W_{K'} w'$. 

It is worth pointing out that the whole Weyl group $W$ does not act on the set of $\d$-combinatorial pieces. The element $x$ acts on $(w, K)$ only if $\Ad(x) \i (K) \subset S$. 

\subsection{Cyclic shifts of pieces}\label{sec:def} The definition of cyclic shifts on $W$ in \S \ref{sec:cyc} does not generalize to the set of $\d$-combinatorial pieces. 

However, there is an equivalent definition of cyclic shifts on $W$, due to Brou\'e and Michel in \cite{BM}. The definition is as follows. Let $w, w' \in W$. Then $w \approx_{\d} w'$ if there exists a sequence of elements $w=w_1, \ldots, w_n=w'$ and elements $x_i, y_i \in W$ such that $w_i=x_i y_i$, $w_{i+1}=y_i \d(x_i)$ and $\ell(w_i)=\ell(x_i)+\ell(y_i)=\ell(w_{i+1})$ for all $1 \le i \le n-1$. See \cite[Exercise 3.7]{GP00}. 

Now we define cyclic shifts of the combinatorial pieces. 

Let $(w, K)$ and $(w', K')$ be $\d$-combinatorial pieces. We write $(w, K) \stackrel{x}{\approx}_\d (w', K')$ if $(w, K)$ and $(w', K')$ are $\d$-conjugated by $x$, and $\ell(w)=\ell(x)+\ell(x \i w)=\ell(w')$. Let $\approx_\d$ be the equivalence relation on the set of $\d$-combinatorial pieces generated by $\stackrel{x}{\approx}_\d$ for $x \in W$. We call it the {\it cyclic shift relation of the $\d$-combinatorial pieces}. When restricting to $W$, this definition coincides with the definition of Brou\'e and Michel. 

Similarly, for any $J \subset S$, let $\approx_{J, \d}$ be the equivalence relation on the set of $\d$-combinatorial pieces generated by $\stackrel{x}{\approx}_\d$ for $x \in W_J$. In geometric applications, we usually consider the $\approx_{J, \d}$ on the set of $\d$-combinatorial pieces $(w, K)$ with the additional assumption that $K \subset J$. 

\begin{proposition}\label{prop:conj}
Let $(w, K)$ and $(w', K')$ be $\d$-combinatorial pieces with $K, K' \subset J$. Suppose that $(w, K) \approx_{J, \d} (w', K')$. Then there exists isomorphisms $\frac{U_{P_J} \backslash P_K \dot w P_{\d(K)} /U_{P_{\d(J)}}}{\Ad_{\dot w \circ \d}(L_J \cap P_K)} \cong \frac{U_{P_J} \backslash P_{K'} \dot w' P_{\d(K')}/U_{P_{\d(J)}}}{\Ad_{\dot w' \circ \d}(L_J \cap P_{K'})}$ and $\frac{L_K}{\Ad_{\dot w \circ \d}(L_K)} \cong \frac{L_{K'}}{\Ad_{\dot w' \circ \d}(L_{K'})}$ such that the following diagram commutes
\[
\xymatrix{
\frac{L_K}{\Ad_{\dot w \circ \d}(L_K)} \ar@{-->}[d]_\cong & \frac{U_{P_J} \backslash P_K \dot w P_{\d(K)} /U_{P_{\d(J)}}}{\Ad_{\dot w \circ \d}(L_J \cap P_K)} \ar[l] \ar[r] \ar@{-->}[d]^\cong & \CY_{J, \d} \ar@{=}[d] \\
\frac{L_{K'}}{\Ad_{\dot w' \circ \d}(L_{K'})} & \frac{U_{P_J} \backslash P_{K'} \dot w' P_{\d(K')}/U_{P_{\d(J)}}}{\Ad_{\dot w' \circ \d}(L_J \cap P_{K'})} \ar[l] \ar[r] & \CY_{J, \d}.
}
\]
\end{proposition}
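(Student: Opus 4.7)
The plan is to reduce by transitivity of $\approx_{J,\d}$ to a single elementary step $(w,K) \stackrel{x}{\approx}_\d (w',K')$ for some $x \in W_J$, so that $w' = x^{-1} w \d(x)$, $K' = \Ad(x)^{-1}(K)$, and $\ell(w) = \ell(x) + \ell(x^{-1} w) = \ell(w')$. Setting $y := x^{-1} w$, the length condition yields the reduced factorizations $w = xy$ and $w' = y \d(x)$, with compatible choices of representatives $\dot w = \dot x \dot y$ and $\dot w' = \dot y \d(\dot x)$ in $N_G(T)$. The overall strategy mirrors the proof of Proposition~\ref{prop:cyc}: construct a Demazure-style swap that exchanges the $\dot x$-factor and the $\d(\dot x)$-factor on the two sides of $\dot w$, and show that it descends to the desired isomorphism of quotient stacks.

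For the middle stack, I would first absorb the unipotent radicals: since $U_{P_J}$ is normal in $P_K$ and $P_K = (L_J \cap P_K) U_{P_J}$, the double-coset quotient $U_{P_J} \backslash P_K \dot w P_{\d(K)} / U_{P_{\d(J)}}$ identifies with $(L_J \cap P_K) \dot w (L_{\d(J)} \cap P_{\d(K)})$. The length condition then supplies the parabolic analogue of the Bruhat product $B \dot x B \cdot B \dot y B = B \dot w B$, which permits a fiber-product-style decomposition of this double coset over an intermediate (generally non-standard) parabolic of $L_J$, and similarly for the primed version. The swap $(g_1, g_2) \mapsto (g_2, \d(g_1))$ from the proof of Proposition~\ref{prop:cyc} then descends to the isomorphism between the two middle stacks. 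For the Levi stacks, the map $L_K \to L_{K'}$, $l \mapsto \dot x^{-1} l \dot x$, is well-defined since $\dot x$ normalizes $T$ and $\Ad(x)^{-1}(K) = K'$ gives $\dot x^{-1} L_K \dot x = L_{K'}$, and a short computation using $\dot w' = \dot x^{-1} \dot w \d(\dot x)$ shows that it intertwines $\Ad_{\dot w \circ \d}$ with $\Ad_{\dot w' \circ \d}$.

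The right square commutes because $\dot x \in L_J$, so that $\d$-conjugation by $\dot x$ acts trivially on $\CY_{J,\d}$; in particular the swap-induced isomorphism covers the identity on $\CY_{J,\d}$. The left square commutes by naturality of the projections $L_J \cap P_K \twoheadrightarrow L_K$ and $L_J \cap P_{K'} \twoheadrightarrow L_{K'}$, which are compatible with the swap. The main technical obstacle is precisely the middle isomorphism: the naive conjugation $g \mapsto \dot x^{-1} g \d(\dot x)$ does \emph{not} send $P_K \dot w P_{\d(K)}$ onto $P_{K'} \dot w' P_{\d(K')}$ on the nose, since $\dot x^{-1} P_K \dot x$ is in general a non-standard parabolic distinct from $P_{K'}$ (because $\dot x$ does not normalize $B$). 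Circumventing this requires the Demazure-type decomposition based on the reduced factorizations $\dot w = \dot x \dot y$ and $\dot w' = \dot y \d(\dot x)$ after passing to the $U_{P_J}$- and $U_{P_{\d(J)}}$-reduced double cosets, where the swap construction can be carried out without reference to the standard parabolic data of $\dot x^{-1} P_K \dot x$.
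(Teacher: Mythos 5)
Your proposal takes essentially the same route as the paper: reduce to an elementary step $(w,K)\stackrel{x}{\approx}_\d(w',K')$, exploit the reduced factorizations $w=xy$, $w'=y\d(x)$ to get a fiber-product decomposition of $P_K\dot w P_{\d(K)}$, and apply the swap $(g_1,g_2)\mapsto(g_2,\d(g_1))$. You also correctly identify the main obstruction (that naive conjugation by $\dot x$ sends $P_K$ to a non-standard parabolic rather than to $P_{K'}$) and the right remedy.

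Two small imprecisions. First, the intermediate group in the fiber-product decomposition is the \emph{standard} parabolic $P_{K'}$ (not a non-standard parabolic of $L_J$): the paper shows $P_K\dot w P_{\d(K)}\cong P_K\dot x P_{K'}\times^{P_{K'}}P_{K'}\dot y P_{\d(K)}$, using $\Ad(x)^{-1}(K)=K'$ and $\Ad(y)\d(K)=K'$ to see that the same standard parabolic $P_{K'}$ appears on both sides. Second, the paper performs this decomposition before quotienting by $U_{P_J}$ and $U_{P_{\d(J)}}$, then checks that the swap descends because $P_K\dot x P_{K'}\subset P_J$ normalizes $U_{P_J}$; your alternative of first collapsing to $(L_J\cap P_K)\dot w(L_{\d(J)}\cap P_{\d(K)})$ is not obviously a literal identification of the double-coset quotient with a Levi-level double coset, and you would need to justify it or follow the paper's ordering. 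Neither issue undermines the strategy, but both would need to be tightened in a complete writeup.
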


\begin{proof}
It suffices to consider the case where $(w, K) \stackrel{x}{\approx}_\d (w', K')$ for some $x \in W_J$. Set $y=x \i w$. Then $w'=y \d(x)$. We have $$P_K \dot w P_{\d(K)}=U_{P_K} \dot w P_{\d(K)} \cong (U_{P_K} \cap \dot w U^- \dot w \i) \times P_{\d(K)}.$$ Moreover, $U_{P_K} \cap \dot w U^- \dot w \i=U \cap \dot w U^- \dot w \i$. Note that $\Ad(y) \d(K)=\Ad(x \i w) \d(K)=\Ad(x) \i(K)=K'$. Similarly, we have 
\begin{gather*}
    P_K \dot x P_{K'} \cong (U \cap \dot x U^- \dot x \i) \times P_{K'};\\
    P_{K'} \dot y P_{\d(K)} \cong (U \cap \dot y U^- \dot y \i) \times P_{\d(K)}.
\end{gather*}

Since $\ell(w)=\ell(x)+\ell(y)$, we have $U \cap \dot w U^- \dot w \i \cong (U \cap \dot x U^- \dot x \i) \times (U \cap \dot y U^- \dot y \i)$. Hence 
\begin{align*}
    P_K \dot w P_{\d(K)} & \cong U \cap \dot w U^- \dot w \i \times P_{\d(K)} \\ & \cong (U \cap \dot x U^- \dot x \i) \times (U \cap \dot y U^- \dot y \i) \times P_{\d(K)} \\ & \cong (U \cap \dot x U^- \dot x \i) \times P_{K'} \dot y P_{\d(K)} \\ & \cong \bigl((U \cap \dot x U^- \dot x \i) \times P_{K'}\bigr) \times^{P_{K'}} P_{K'} \dot y P_{\d(K)} \\ & \cong P_K \dot x P_{K'} \times^{P_{K'}} P_{K'} \dot y P_{\d(K)}. 
\end{align*}

The map $(g_1, g_2) \mapsto (g_2, \d(g_1))$ gives a natural isomorphism $$f: \frac{P_K \dot x P_{K'} \times^{P_{K'}} P_{K'} \dot y P_{\d(K)}}{\Ad_{\d}(P_K)} \cong \frac{P_{K'} \dot y P_{\d(K)} \times^{P_{\d(K)}} P_{\d(K)} \d(\dot x) P_{\d(K')}}{\Ad_{\d}(P_{K'})}.$$ 

Since $K, K' \subset J$ and $x \in W_J$, we have $P_K \dot x P_{K'} \subset P_K$ and hence the conjugation action of $P_K \dot x P_{K'}$ normalizes $U_{P_J}$. Thus $f$ induces the desired isomorphism $\frac{U_{P_J} \backslash P_K \dot w P_{\d(K)} /U_{P_{\d(J)}}}{\Ad_\d(L_J \cap P_K)} \cong \frac{U_{P_J} \backslash P_{K'} \dot w' P_{\d(K')}/U_{P_{\d(J)}}}{\Ad_\d(L_J \cap P_{K'})}$. 

Note that the representative of $w'$ in $N_G(T)$ is unique up to right multiplication by $T$. As we consider the morphisms on the quotient stacks, we may assume furthermore that $\dot w'$ is chosen so that $\dot w'=\dot x \i \dot w \d(\dot x)$. Then $\Ad(\dot x) \i \circ \Ad(\dot w) \circ \d=\Ad(\dot w') \circ \d \circ \Ad(\dot x) \i$ on $L_K$. The desired isomorphism  $\frac{L_K}{\Ad_{\dot w \circ \d}(L_K)} \cong \frac{L_{K'}}{\Ad_{\dot w' \circ \d}(L_{K'})}$ is induced from the conjugation action by $\dot x \i$. 
\end{proof}

\begin{theorem}\label{thm:cyc}
Let $J \subset S$ and $(w, K)$ be a $\d$-combinatorial piece. Suppose that $K \subset J$ and $w$ is of minimal length in $W_J \cdot_\d w$. Then there exists unique $w' \in {}^J W$, $x \in W_J \cap W^{I(J, w', \d)}$, $u \in W_{I(J, w', \d)}$ such that $x \i w \d(x)=u w'$, $\Ad(x) \i(K) \subset J$ and $(w, K) \approx_{J, \d} (u w', \Ad(x) \i(K))$. 
\end{theorem}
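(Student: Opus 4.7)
My plan is to first pin down the triple $(w', x, u)$ using Proposition~\ref{prop:w}(1) together with a minimality argument, and then lift the chain of ordinary cyclic shifts provided by the cited proposition from \cite{He07} (on the partial conjugation graph) to a chain of cyclic shifts of combinatorial pieces.

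\textbf{Construction of the triple.} By Proposition~\ref{prop:w}(1), the element $w' \in {}^J W$ with $w \in W_J \cdot_\d (W_{J'} w')$ is uniquely determined, where $J' := I(J, w', \d)$. I will take $x$ to be any element of $W_J$ of minimal length subject to $x\i w \d(x) \in W_{J'} w'$, and set $u := x\i w \d(x) (w')\i \in W_{J'}$. To verify $x \in W^{J'}$, suppose $xs < x$ for some $s \in J'$; since $\Ad(w')\d(J') = J'$, setting $s'' := \Ad(w')\d(s) \in J'$, the element $(xs)\i w \d(xs)$ rewrites as $s \cdot uw' \cdot \d(s) = s u s'' w' \in W_{J'} w'$, and $\ell(xs) < \ell(x)$ contradicts the minimality of $x$.

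\textbf{Lifting the path.} The cited proposition from \cite{He07} yields a sequence of length-preserving Brou\'e--Michel shifts $w = w_0 \stackrel{s_0}{\approx}_\d w_1 \cdots \stackrel{s_{n-1}}{\approx}_\d w_n = uw'$ with $s_i \in J$ and each $w_i$ of minimal length in $W_J \cdot_\d w$. I will set $K_i := \Ad(s_0 s_1 \cdots s_{i-1})\i(K)$ and try to upgrade each step to a single-step cyclic shift $(w_i, K_i) \stackrel{s_i}{\approx}_\d (w_{i+1}, K_{i+1})$ of combinatorial pieces. The technical heart of the proof is a combinatorial claim that the path can be chosen so that $s_i K_i s_i \subset J$ at each step, keeping $K_i \subset J$ throughout; this will use the $\Ad(w_i) \circ \d$-stability of $W_{K_i}$, the minimality of each $w_i$ in its $W_J$-orbit, and the exchange condition in $(W, S)$. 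Combined with the construction of $x$, the total conjugator $s_0 \cdots s_{n-1}$ decomposes length-additively as $xv$ with $v \in W_{J'}$, giving $\Ad(x)\i(K) = K_n \subset J$.

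\textbf{Uniqueness and main obstacle.} Uniqueness of $w'$ is Proposition~\ref{prop:w}(1); uniqueness of $x$ will follow from its characterization as the minimal-length element of $W_J \cap W^{J'}$ conjugating $w$ into $W_{J'} w'$, and then $u$ is forced by the formula $u = x\i w \d(x)(w')\i$. The hardest step I expect to face is the combinatorial claim above: generic simple reflections in $J$ map elements of $K$ out of $S$, so the path from the cited proposition may need to be selected carefully to ensure each $s_i$ normalizes $K_i$. The interplay between the minimality of $w$ in its orbit and the $\Ad(w) \circ \d$-stability of $K$ is what makes this possible.
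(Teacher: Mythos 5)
The main obstacle you flag at the end is not a technicality to be smoothed over—it is precisely the point where your approach diverges from (and falls short of) what is needed. You propose to decompose the $W_J$-conjugation from $w$ to $u w'$ into single simple-reflection steps $s_i \in J$ and then upgrade each step to a one-step cyclic shift of combinatorial pieces, keeping $K_i \subset J$ throughout. But there is no reason such a simple-reflection path should exist: a simple reflection $s\in J$ will generically carry a simple root in $K_i$ to a non-simple root, so $\Ad(s_i)^{-1}(K_i)$ need not be a subset of $S$, let alone of $J$. The ``combinatorial claim'' you isolate as the technical heart is exactly what the definition of $\approx_{J,\delta}$ on combinatorial pieces was designed \emph{not} to require. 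Recall that $\stackrel{x}{\approx}_\delta$ is defined in the Brou\'e--Michel style for arbitrary $x\in W_J$ with the length-additivity condition $\ell(w)=\ell(x)+\ell(x^{-1}w)$; the intermediate simple-reflection substeps inside $x$ need not preserve the $K$-part. The paper exploits this flexibility directly: it runs B\'edard's construction on $w$ to produce a sequence $(J_n,w_n,x_n,y_n)$, writes $w'_n=(x_0\cdots x_{n-1})^{-1}w\,\delta(x_0\cdots x_{n-1})$, and shows that each \emph{block} conjugator $x_n\in W_{J_n}\cap W^{J_{n+1}}$ (typically of length $>1$) satisfies $\Ad(x_0\cdots x_i)^{-1}(K)\subset J_{i+1}$, via Lemma~\ref{lem:J-inf}. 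Minimality of $\ell(w)$ forces all the length inequalities $\ell(w'_n)\ge\ell(w'_{n+1})$ to be equalities, giving the chain $(w,K)\stackrel{x_0}{\approx}_\delta\cdots\stackrel{x_m}{\approx}_\delta(uw',K')$ with $x_0\cdots x_m=x$. Your plan of selecting the path ``carefully'' to make every individual $s_i$ preserve $K_i\subset J$ is not an available option in general, so the lifting step fails as stated.

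A smaller but genuine gap: your uniqueness argument for $x$ does not close. You characterize $x$ as a minimal-length element of $W_J$ conjugating $w$ into $W_{J'}w'$, but minimal-length elements with that property need not be unique a priori. Uniqueness follows instead from Proposition~\ref{prop:w}(2), which identifies the isotropy groups and hence shows there is a unique $x\in W_J\cap W^{I(J,w',\delta)}$ with $x^{-1}w\,\delta(x)\in W_{I(J,w',\delta)}w'$; your proof should invoke that statement rather than re-derive it from minimality alone. Finally, the inclusion $\Ad(x)^{-1}(K)\subset I(J,w',\delta)\subset J$ is established in the paper by the explicit computation with Lemma~\ref{lem:J-inf} showing $\Ad(x)^{-1}(W_K)\subset W_{I(J,w',\delta)}$ and then using $x\in W^{I(J,w',\delta)}$; your sketch derives it as a byproduct of the (unproved) path property, so even this conclusion is left hanging.
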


\begin{proof}
By Proposition \ref{prop:w} (1), there exists a unique $w' \in {}^J W$ such that $w \in W_J \cdot_\d W_{I(J, w', \d)} w'$. By Proposition \ref{prop:w} (2), there exists a unique $x \in W_J \cap W^{I(J, w', \d)}$ such that $x \i w \d(x) \in W_{I(J, w', \d)} w'$. Let $u \in W_{I(J, w', \d)}$ with $x \i w \d(x)=u w'$. 

By Lemma \ref{lem:J-inf}, \begin{align*} W_{I(J, w', \d)} &=\cap_{n \in \BZ} (\Ad(w') \circ \d)^n (W_J)=\cap_{n \in \BZ} (\Ad(u w') \circ \d)^n (W_J) \\ &=\cap_{n \in \BZ} (\Ad(x) \i \Ad(w) \circ \d \circ \Ad(x))^n (W_J) \\ &=\cap_{n \in \BZ} \Ad(x) \i (\Ad(w) \circ \d)^n \Ad(x) (W_J) \\ &=\Ad(x) \i \bigl(\cap_{n \in \BZ} (\Ad(w) \circ \d)^n(W_J) \bigr) \\ & \supset \Ad(x) \i(W_K).
\end{align*}
The second equality uses the fact that $(\Ad(u w') \circ \d)^n=(\Ad(w') \circ \d)^n \Ad(u')$ for some $u' \in W_{I(J, w', \d)}$. 

Therefore $x \i$ sends any simple root in $K$ to a root spanned by $I(J, w', \d)$. Since $x \in W^{I(J, w', \d)}$, we have $\Ad(x) \i(K) \subset I(J, w', \d)$. Let $K'=\Ad(x) \i(K)$. Then \begin{align*} \Ad(u w') \d(K') &=\Ad(x) \i \Ad(w) \d \Ad(x)(K')=\Ad(x) \i \Ad(w) \d(K) \\ &=\Ad(x) \i(K)=K'.\end{align*} Thus $(u w', K')$ is a $\d$-combinatorial piece. It remains to prove that $(w, K) \approx_{J, \d} (u w', K')$. 

We associate a quadruple $(J_n, w_n, x_n, y_n)_{n \ge 0}$ to $w$. Here $J_n \subset J$, $w_n \in {}^{J_n} W^{\d(J_n)}$, $x_n \in W_{J_n} \cap W^{J_{n+1}}$ and $y_n \in W_{J_{n+1}}$ for all $n$. The quadruple is constructed as follows. 

Set $J_0=J$. Suppose that $n \ge 0$ and $(J_m, x_m)$ are defined for $0 \le m<n$. We set $w'_n=(x_0 \cdots x_{n-1}) \i w \d(x_0 \cdots x_{n-1})$. We may write $w'_n$ as $w'_n=z_n w''_n$, where $w''_n \in {}^{J_n} W$ and $z_n \in W_{J_n}$. Set $w_n=\min(w''_n W_{\d(J_n)})$ and $J_{n+1}=J_n \cap \Ad(w_n) \d(J_n)$. We write $z_n$ as $z_n=x_n y_n$, where $x_n \in W_{J_n} \cap W^{J_{n+1}}$ and $y_n \in W_{J_{n+1}}$. The quadruple $(J_n, w_n, x_n, y_n)_{n \ge 0}$ is defined inductively. 

By definition, $(J_n, w_n)_{n \ge 0} \in \CT(J, \d)$. Then there exists $m$ such that $J_m=J_{m+1}=\cdots$, $w_m=w_{m+1}=\cdots$. We then have $x_m=x_{m+1}=\cdots=1$ and $y_m=y_{m+1}=\cdots$. Moreover, $J_m=I(J, w_m, \d)$. Set $x'=x_0 x_1 \cdots x_m \in W_{J} \cap W^{I(J, w_m, \d)}$. Then $(x') \i w x' \in W_{J_m} w_m=W_{I(J, w_m, \d)} w_m$. By Proposition \ref{prop:w}, we have $w_m=w'$ and $x'=x$. 

Moreover, we have $\ell(w'_n)=\ell(w''_n)+\ell(z_n)=\ell(w''_n)+\ell(x_n)+\ell(y_n)=\ell(y_n w''_n)+\ell(x_n)$. Thus 
\begin{align*} \ell(w'_{n+1}) &=\ell(x_n \i w'_n \d(x_n)) \le \ell(x_n \i w'_n)+\ell(x_n)=\ell(y_n w''_n)+\ell(x_n) \\ &=\ell(w'_n).
\end{align*}

In particular, we have $\ell(w)=\ell(w'_0) \ge \ell(w'_1) \ge \cdots \ge \ell(w'_m)=\ell(x \i w \d(x))$. By our assumption, $w$ is of minimal length in $W_J \cdot_\d w$. Hence $\ell(w)=\ell(w'_1)=\cdots=\ell(w'_m)=\ell(x \i w \d(x))$. Moreover, we have $w=w'_0 \approx_{J, \d} w'_1 \approx_{J, \d} \cdots \approx_{J, \d} w'_m=x \i w \d(x)$. 

Note that $\Ad(x_0) \i(W_K)=\Ad(x_1 \cdots x_m) W_{K'} \subset W_{J_1}$. Since $x_0 \in W^{J_1}$, we have $\Ad(x_0) \i(K) \subset J_1$. By the same argument, we may show that $\Ad(x_0 \cdots x_i) \i(K) \subset J_{i+1}$ for all $i \ge 0$. 

Hence we have $(w, K) \stackrel{x_0}{\approx}_\d (w'_1, \Ad(x_0) \i (K)) \stackrel{x_1}{\approx}_\d \cdots \stackrel{x_m}{\approx}_\d (u w', K')$. 
\end{proof}

\section{Applications}\label{sec:app}

\subsection{Left-right symmetry}
Recall that $$W=\bigsqcup_{w \in {}^J W} W_J \cdot_\d (W_{I(J, w, \d)} w).$$ By \cite[\S 2]{He07}, we have the following decomposition of $W$ indexed by $W^{\d(J)}$ instead of ${}^J W$: $$W=\bigsqcup_{w \in W^{\d(J)}} W_J \cdot_\d (W_{I(J, w, \d)} w).$$

Now we show that there exists a natural bijection between ${}^J W$ and $W^{\d(J)}$ so that the corresponding subsets of $W$ coincide. 

\begin{proposition}
Let $J \subset S$. Then there exists a unique bijection $\iota: W^{\d(J)} \to {}^J W$ such that $$\bigl(w, I(J, w, \d) \bigr) \approx_{J, \d} \bigl(\iota(w), I(J, \iota(w), \d) \bigr).$$ In particular, $W_J \cdot_\d (W_{I(J, w, \d)} w)=W_J \cdot_\d (W_{I(J, \iota(w), \d)} \iota(w))$. 
\end{proposition}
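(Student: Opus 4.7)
The plan is to reduce the proposition to Theorem~\ref{thm:cyc}. Uniqueness of $\iota(w)$ is the easy half: any cyclic shift of combinatorial pieces preserves the $W_J\cdot_\d$-orbit, and the disjoint decomposition $W = \bigsqcup_{w' \in {}^J W} W_J \cdot_\d (W_{I(J, w', \d)} w')$ from Proposition~\ref{prop:w} forces $\iota(w)$ to be the unique element of ${}^J W$ lying in the orbit of $w$. Comparing this decomposition with its analogue indexed by $W^{\d(J)}$ yields bijectivity of $\iota$, and the claimed equality $W_J \cdot_\d (W_{I(J,w,\d)} w) = W_J \cdot_\d (W_{I(J,\iota(w),\d)} \iota(w))$ is then automatic.

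For existence, the starting observation is that every $w \in W^{\d(J)}$ is already of minimum length in $W_J \cdot_\d w$: for $a \in W_J$, the length additivity $\ell(w\d(a)^{-1}) = \ell(w) + \ell(a)$ (from $w \in W^{\d(J)}$ and $\d(a)^{-1} \in W_{\d(J)}$) gives $\ell(aw\d(a)^{-1}) \ge \ell(w\d(a)^{-1}) - \ell(a) = \ell(w)$. I can therefore apply Theorem~\ref{thm:cyc} to the combinatorial piece $(w, I(J,w,\d))$, obtaining $\iota(w) \in {}^J W$, $x \in W_J \cap W^{I(J,\iota(w),\d)}$, and $u \in W_{I(J,\iota(w),\d)}$ together with a cyclic shift $(w, I(J,w,\d)) \approx_{J,\d} (u\iota(w), \Ad(x)^{-1}(I(J,w,\d)))$. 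The remaining task is to upgrade this conclusion by showing $u = 1$ and $\Ad(x)^{-1}(I(J,w,\d)) = I(J,\iota(w),\d)$.

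The main obstacle is $u = 1$, since cyclic shifts preserve length, so no further shift could collapse $u\iota(w)$ to $\iota(w)$. Here I would reopen the induction inside the proof of Theorem~\ref{thm:cyc} under the extra hypothesis $w \in W^{\d(J)}$ and prove that every intermediate element $w'_n$ lies in $W^{\d(J_n)}$. The inductive step combines the automatic length preservation $\ell(w'_{n+1}) = \ell(w'_n)$ (from minimality of $w$ in its orbit) with the identity $w'_{n+1}\d(b) = x_n^{-1} w'_n \d(x_n b)$ and the additivity $\ell(w'_n \d(x_n b)) = \ell(w'_n) + \ell(x_n) + \ell(b)$ for $b \in W_{J_{n+1}}$, yielding the required lower bound $\ell(w'_{n+1}\d(b)) \ge \ell(w'_{n+1}) + \ell(b)$. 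At stabilization one has $x_m = 1$, $w'_m = z_m w_m$ with $w_m \in {}^{J_m} W^{\d(J_m)}$, and $\Ad(w_m)\d(J_m) = J_m$. Setting $s' = w_m\d(s)w_m^{-1} \in J_m$ for $s \in J_m$ and comparing $\ell(w'_m\d(s)) = \ell(z_m s') + \ell(w_m)$ with $\ell(w'_m) = \ell(z_m) + \ell(w_m)$ forces $z_m s' > z_m$ for every $s' \in J_m$. Since the identity is the only element of $W_{J_m}$ with no right descent in $J_m$, we get $z_m = 1$ and hence $u = z_m = 1$.

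Finally, the equality $\Ad(x)^{-1}(I(J,w,\d)) = I(J,\iota(w),\d)$ is proved as follows. One inclusion was already established inside the proof of Theorem~\ref{thm:cyc}; for the reverse I would use the identity $w = x\,\iota(w)\,\d(x)^{-1}$, which gives $\Ad(x)(\Ad(\iota(w))\d)^n\Ad(x)^{-1} = (\Ad(w)\d)^n$ for all $n$. Applying this to $W_J$ and using $\Ad(x)(W_J) = W_J$, Lemma~\ref{lem:J-inf} yields $\Ad(x)\, W_{I(J,\iota(w),\d)} = W_{I(J,w,\d)}$, so the two parabolic subgroups have the same rank and the inclusion of subsets of $S$ is an equality.
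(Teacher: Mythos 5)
Your proposal is correct and follows the paper's overall strategy (observe that $w \in W^{\d(J)}$ is of minimal length in $W_J \cdot_\d w$, then feed the piece $(w, I(J,w,\d))$ into Theorem~\ref{thm:cyc}), but the crucial step $u = 1$ is handled by a genuinely different argument. You re-enter the inductive construction of Theorem~\ref{thm:cyc} and propagate the extra invariant $w'_n \in W^{\d(J_n)}$, finally reading off $z_m = 1$ because $z_m \in W_{J_m}$ has no right descent in $J_m$. The paper instead reverses the order of your last two steps and stays at the level of the output data of Theorem~\ref{thm:cyc}: it first establishes $\Ad(x)^{-1}(K) = I(J, w', \d)$ via Lemma~\ref{lem:J-inf} and the identity $\Ad(x)^{-1}(\Ad(w)\circ\d)^n\Ad(x) = (\Ad(uw')\circ\d)^n$, and then notes that $xuw' = w\d(x)$ lies in $W^{\d(I(J,w',\d))}$ directly from $w \in W^{\d(J)}$ together with $x \in W^{I(J,w',\d)}$, which forces $u=1$ since $w' \in {}^{I(J,w',\d)}W^{\d(I(J,w',\d))}$ normalizes $I(J,w',\d)$ under $\Ad(\cdot)\circ\d$. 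The paper's route is shorter because it never reopens the induction; yours is somewhat longer but equally valid. Your subsequent derivation of $\Ad(x)^{-1}(K) = I(J, w', \d)$ uses the same conjugation identity and Lemma~\ref{lem:J-inf} as the paper, merely placed after rather than before the $u=1$ argument, and your uniqueness/bijectivity discussion via the two decompositions of $W$ (indexed by ${}^J W$ and $W^{\d(J)}$) is also consistent with what the paper leaves implicit.
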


\begin{proof}
Let $w \in W^{\d(J)}$ and $K=I(J, w, \d)$. By definition, $(w, K)$ is a $\d$-combinatorial piece. For any $u \in W_J$, we have $$\ell(u w \d(u) \i) \ge \ell(w \d(u) \i)-\ell(u)=\ell(w)+\ell(\d(u))-\ell(u)=\ell(w).$$ Therefore $w$ is of minimal length in $W_J \cdot_\d w$. By Theorem \ref{thm:cyc}, there exists unique $w' \in {}^J W$, $x \in W_J \cap W^{I(J, w', \d)}$, $u \in W_{I(J, w', \d)}$ such that $x \i w \d(x)=u w'$, $\Ad(x) \i(K) \subset J$ and $(w, K) \approx_{J, \d} (u w', \Ad(x) \i(K))$. 

Set $K'=\Ad(x) \i(K)$. By Lemma \ref{lem:J-inf}, $W_{K}=\cap_{n \in \BZ} (\Ad(w) \circ \d)^n (W_J)$. Hence \begin{align*} W_{K'} &=\Ad(x) \i(W_{K})=\cap_{n \in \BZ} \Ad(x) \i (\Ad(w) \circ \d)^n (W_J) \\ &=\cap_{n \in \BZ} (\Ad(u w') \circ \d)^n \Ad(x) \i (W_J)\\ &=\cap_{n \in \BZ} (\Ad(u w') \circ \d)^n (W_J) \\ &=\cap_{n \in \BZ} (\Ad(w') \circ \d)^n (W_J) \\ &=W_{I(J, w', \d)}.
\end{align*}
The second to last equality uses the fact that $(\Ad(u w') \circ \d)^n=(\Ad(w') \circ \d)^n \Ad(u')$ for some $u' \in W_{I(J, w', \d)}$. 

Therefore $K'=I(J, w', \d)$. Hence $x u w'=w \d(x) \in W^{\d(I(J, w', \d)}$. This implies that $u=1$. So $(w, K) \approx_{J, \d} (w', K')$. By the definition of cyclic shifts of combinatorial pieces, we also have $W_J \cdot_\d (W_{K} w)=W_J \cdot_\d (W_{K'} w')$. 
\end{proof}

Let $w \in W^{\d(J)}$ and $w'=\iota(w) \in {}^J W$. Since $w \approx_{J, \d} w'$, we have $\CY_{J, \d;w}=\CY_{J, \d;w'}$. Moreover, by Proposition \ref{prop:conj}, we have the following commutative diagram
\[
\xymatrix{
\Sh(\frac{L_{I(J, w, \d)}}{\Ad_{\dot w \circ \d}(L_{I(J, w, \d)})}) \ar[d]_{\Ad(\dot x) \i} \ar[r]^-{\cong} & \Sh(\CY_{J, \d;w}) \ar@{=}[d] \\
\Sh(\frac{L_{I(J, w', \d)}}{\Ad_{\dot w' \circ \d}(L_{I(J, w', \d)})}) \ar[r]^-{\cong} & \Sh(\CY_{J, \d;w'}),
}
\]
where $x$ is the unique element in $W_J \cap W^{I(J, w', \d)}$ such that $w'=x \i w \d(x)$. 

Now we provide a nontrivial example of the map $\iota$. 

\begin{example}
Let $W=S_5$, $J=\{1, 3\}$ and $\d$ is the identity map. Then one may check that $\iota(s_{1 2 1 3 2 4})=s_{2 1 3 2 4 3}$. In particular, the map $\iota$ is different from taking inverse, and different from the one-step operation $x y \mapsto y x$ for $x \in W_J$ and $y \in {}^J W$. 
\end{example}

\subsection{The functors $f^J_{J'}$}
We follow \cite[\S 6]{Lu1}. Let $J \subset J' \subset S$. Let $\pi: \CP_J \to \CP_{J'}$ be the projection map. Set $$Z_{J, J', \d}=\{(P, P', g \d(U_{\pi(P)})); P \in \CP_J, P' \in \CP_{\d(J)}, {}^g \d(P)=P'\}.$$ Define the action of $G \times G$ on $Z_{J, J', \d}$ by $(g_1, g_2) \cdot (P, P', g U_{\pi(P)})=({}^{g_2} P, {}^{g_1} P', g_1 g \d(g_2) \i \d(U_{\pi({}^{g_2} P)}))$. Then $G \times G$ acts transitively on $Z_{J, J', \d}$. Let $h_{J, J', \d}=(P_J, P_{\d(J)}, U_{P_{\d(J')}})$ be the base point. Then we may identify $Z_{J, J', \d}$ with $(G \times G)/(U_{P_{J'}} \times U_{P_{\d(J')}}) \Ad_\d(L_{J'} \cap P_J)$, where $L_{J'} \cap P_J$ is a standard parabolic subgroup of $L_{J'}$. 

We consider the following diagram 
\[
\xymatrix{Z_{J, \d} & Z_{J, J', \d} \ar[l]_-c \ar[r]^-d & Z_{J', \d},}
\]
where 
\begin{gather*} c(P, P', g \d(U_{\pi(P)}))=(P, P', g \d(U_{P})), \\ d(P, P', g \d(U_{\pi(P)}))=(\pi(P), \pi(P'), g \d(U_{\pi(P)})).
\end{gather*}

It is easy to see that $c$ is a locally trivial fibration with fibers isomorphic to an affine space $U_{P_J}/U_{P_{J'}}$ and $d$ is a locally trivial fibration with fibers isomorphic to the flag variety $L_{J'}/(L_{J'} \cap P_J)$. Set \begin{align*} f^J_{J'}=d_! c^*: \Sh(\frac{Z_{J, \d}}{\D G}) \to \Sh(\frac{Z_{J', \d}}{\D G}). 
\end{align*}

We may reformulate the functors as follows. Consider the following diagram of stacks 
\[
\xymatrix{
\CY_{J, \d} & \CY_{J, J', \d} \ar[l]_-q \ar[r]^-p & \CY_{J', \d} 
}
\] where $\CY_{J, J', \d}=\frac{U_{P_{J'}} \backslash G/U_{P_{\d(J')}}}{\Ad_\d(L_{J'} \cap P_J)}$ and $q$ and $p$ are natural projection maps. Under the isomorphism $\frac{Z_{J, \d}}{\D(G)} \cong \CY_{J, \d}$ and $\frac{Z_{J', \d}}{\D(G)} \cong \CY_{J', \d}$, we may rewrite the functor $f^J_{J'}$ as 
\begin{align*} f^J_{J'}=p_! q^*: \Sh(\CY_{J, \d}) \to \Sh(\CY_{J', \d}). 
\end{align*}

Let $w \in {}^J W$ and $w' \in {}^{J'} W$. Let $i_{J, \d; w}: \CY_{J, \d;w} \to \CY_{J, \d}$ and $i_{J', \d; w'}: \CY_{J', \d; w'} \to \CY_{J', \d}$ be the embeddings. We define 
\begin{align*} f^{J, w}_{J', w'}=i_{J', \d; w'}^* \, p_! q^* (i_{J, \d; w})_!: \Sh(\CY_{J, \d;w}) \to \Sh(\CY_{J', \d;w'}). 
\end{align*}

By definition, for any $w \in {}^J W$, $p(q \i(\CY_{J, \d; w}))=\CY_{J', \d; w}$. Hence by \S\ref{sec:partial} (c), we have 

(a) {\it Let $w \in {}^J W$ and $A$ be a parabolic character sheaf on $\CY_{J, \d}$ with support in $\overline{\CY_{J, \d; w}}$. Assume that $W_{J'} \cdot_\d w \cap {}^{J'} W=\emptyset$ or $w$ is not of minimal length in $W_{J'} \cdot_\d w$. Then the support of $f^J_{J'}(A)$ is contained in $\sqcup_{w' \in {}^{J'} W; \ell(w')<\ell(w)} \CY_{J', \d; w'}$. 
}



\subsection{Induction functor} Let $(w, K)$ be a $\d$-combinatorial piece. Consider the following diagram 
\[
\xymatrix{
\frac{L_K}{\Ad_{\dot w \circ \d}(L_K)} \cong \frac{L_K \dot w}{\Ad_{\d}(L_K)} & \frac{P_K \dot w P_{\d(K)}}{\Ad_{\d}(P_K)} \ar[l]_-a \ar[r]^-b & \frac{G}{\Ad_\d(G)},
}
\]
where $a$ is induced from the projection map $P_K \dot w P_{\d(K)} \to L_K \dot w$ and $b$ is induced from the embedding $P_K \dot w P_{\d(K)} \to G$. 

Following \cite[\S 4.1]{S}, we define 
\begin{align*} \Ind^{(G, \d)}_{(L_K, \dot w)}=b_! a^*: \Sh(\frac{L_K}{\Ad_{\dot w \circ \d}(L_K)}) \to \Sh(\frac{G}{\Ad_\d(G)}). 
\end{align*}

If $w=1$, then $K=\d(K)$ and $P_K$ is a $\d$-stable standard parabolic subgroup of $G$. In this case, $\Ind$ is the Harish-Chandra induction functor. If $K=\emptyset$, then $P_K=B$ and $\Ind$ is the Deligne-Lusztig induction functor. 

Now we prove that 

\begin{theorem}\label{thm:main}
Let $J \subset J' \subset S$. Let $w \in {}^J W$ such that $w$ is of minimal length in $W_J \cdot_\d w$. Let $w' \in {}^{J'} W$, $u \in W_{I(J', w', \d)}$ and $x \in W_{J'} \cap W^{I(J', w', \d)}$ such that $x \i w \d(x)=u w'$. Set $K=I(J, w, \d)$, $K_1=\Ad(x) \i(K)$, $K'=I(J', w', \d)$ and $\d'=\Ad(\dot w') \circ \d$. Then we have the following commutative diagram
\[
\xymatrix{
\Sh(\frac{L_{K}}{\Ad_{\dot w \circ \d}(L_{K})}) \ar[d]_{\Ind^{(L_{K'}, \d')}_{(L_{K_1}, \dot u)} \circ \Ad(\dot x) \i} \ar[rr]^-{\pi^*_{J, \d; w}}_-{\cong} & & \Sh(\CY_{J, \d;w}) \ar[d]^{f^{J, w}_{J', w'}} \\
\Sh(\frac{L_{K'}}{\Ad_{\dot w' \circ \d}(L_{K'})}) \ar[rr]^-{\pi^*_{J', \d; w'}}_-{\cong} & & \Sh(\CY_{J', \d; w'}).
}
\]

\end{theorem}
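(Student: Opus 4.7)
The plan is to reduce the commutativity to a clean aligned case via the cyclic-shift machinery, then verify it in that case by a direct computation on the Levi side.

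First, I would apply Theorem \ref{thm:cyc} (with $J$ replaced by $J'$) to the combinatorial piece $(w, K)$, noting $K \subset J \subset J'$. This produces the data $w', u, x$ of the theorem statement together with the relation $(w, K) \approx_{J', \d} (\dot u \dot w', K_1)$ for $K_1 = \Ad(x)^{-1}(K)$; the proof of Theorem \ref{thm:cyc} in fact shows the sharper inclusion $K_1 \subset K'$, which is crucial below. Proposition \ref{prop:conj} applied with parabolic $J'$ then yields compatible isomorphisms of the subsets $\CY_{J', \d; w}$ and $\CY_{J', \d; \dot u \dot w'}$ of $\CY_{J', \d}$, together with the identification $\Ad(\dot x)^{-1}: \frac{L_K}{\Ad_{\dot w \circ \d}(L_K)} \cong \frac{L_{K_1}}{\Ad_{\dot u \dot w' \circ \d}(L_{K_1})}$ already appearing in the theorem. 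Because each elementary conjugating element $x_i \in W_{J'}$ produced by the proof of Theorem \ref{thm:cyc} has $\dot x_i \in L_{J'}$ (which normalizes $U_{P_{J'}}$), the same cyclic-shift path lifts to a compatible triple of isomorphisms of the relevant pieces in $\CY_{J, \d}$, $\CY_{J, J', \d}$, $\CY_{J', \d}$ that intertwines the projections $q$ and $p$. This reduces the theorem to the aligned case $x = 1$, $w = \dot u \dot w'$, $K = K_1 \subset K'$.

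In the aligned case, I would describe $\CY_{J, \d; \dot u \dot w'}$ and $\CY_{J', \d; w'}$ via Proposition \ref{prop:cyw} and unpack the diagram defining $f^{J, \dot u \dot w'}_{J', w'}$. Because $K \subset K' \subset J'$, one has $P_K = (L_{K'} \cap P_K) \cdot U_{P_{K'}}$ and similarly for $\d(K), K', \d(J')$, so the fibration $c: \CY_{J, J', \d} \to \CY_{J, \d}$ restricted to the preimage of the piece contributes exactly the unipotent gerbes that also appear in the canonical maps $\pi_{J, \d; \dot u \dot w'}$ and $\pi_{J', \d; w'}$. These gerbes cancel out of the cohomological functor $p_! q^*$, and after conjugating by $\dot w'$ on the right the restricted diagram becomes
\[
\frac{L_K}{\Ad_{\dot u \dot w' \circ \d}(L_K)} \xleftarrow{a'} \frac{(L_{K'} \cap P_K) \dot u}{\Ad_{\dot w' \circ \d}(L_{K'} \cap P_K)} \xrightarrow{b'} \frac{L_{K'}}{\Ad_{\dot w' \circ \d}(L_{K'})},
\]
with $a'$ the Levi projection onto $L_K \dot u$ and $b'$ the inclusion $L_{K'} \cap P_K \hookrightarrow L_{K'}$. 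This is precisely the defining diagram of $\Ind^{(L_{K'}, \d')}_{(L_K, \dot u)}$, so $(b')_! (a')^* = \Ind^{(L_{K'}, \d')}_{(L_K, \dot u)}$, giving the claimed commutativity.

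The main obstacle will be Step 2: propagating the cyclic-shift identification of Proposition \ref{prop:conj} coherently through the intermediate stack $\CY_{J, J', \d}$ so that both $q$ and $p$ are respected simultaneously. A secondary bookkeeping challenge is in the aligned case, where one must verify that the iterated unipotent gerbes attached to $c$, to $\pi_{J, \d; \dot u \dot w'}$, and to $\pi_{J', \d; w'}$ match up correctly so that $p_! q^*$ on the pieces reduces cleanly to the desired Harish-Chandra-style induction on the Levi side without any leftover unipotent factors.
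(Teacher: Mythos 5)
Your high-level strategy (invoke Theorem~\ref{thm:cyc} to produce the cyclic-shift data, use Proposition~\ref{prop:conj} to transport, then verify the remaining commutativity by unpacking $\pi_1, \pi_2, a, b$ on the reductive quotients) is the same strategy the paper uses, and your ``aligned case'' computation corresponds accurately to the lower half of the paper's commutative diagram. However, the reduction to the aligned case $x=1$ does not work in the form you state, and this is a genuine gap rather than just a bookkeeping nuisance. Proposition~\ref{prop:conj} applied with $J'$ gives an isomorphism between the two descriptions of a single substack of $\CY_{J',\d}$, so $p$ is respected; but there is no corresponding action on $\CY_{J,\d}$: the elements $x_i \in W_{J'}$ do not normalize $L_J$ or $U_{P_J}$, so conjugation by $\dot x_i$ is not defined on $\CY_{J,\d}$, and the ``compatible triple of isomorphisms of pieces in $\CY_{J,\d}$, $\CY_{J,J',\d}$, $\CY_{J',\d}$'' does not exist. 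Moreover, the aligned instance $(u w', K_1)$ is not a valid input to the theorem at level $J$: the element $u w'$ need not lie in ${}^J W$ and $K_1$ need not be contained in $J$, so the left vertical equivalence $\pi^*_{J,\d;?}$ would not even be defined after the would-be reduction.

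The paper avoids this by never moving the piece $\CY_{J,\d;w}$. Instead it pulls everything up to $q^{-1}(\CY_{J,\d;w}) \subset \CY_{J,J',\d}$, where Proposition~\ref{prop:conj} (with $J'$ in place of $J$) supplies the isomorphism $f: q^{-1}(\CY_{J,\d;w}) \cong X := \frac{U_{P_{J'}}\backslash P_{K_1}\dot u \dot w' P_{\d(K_1)}/U_{P_{\d(J')}}}{\Ad_{\dot u \dot w'\circ\d}(L_{J'}\cap P_{K_1})}$ together with the commutativity $\pi_1\circ f = \Ad(\dot x)^{-1}\circ\pi_{J,\d;w}\circ q$; the computation of $f^{J,w}_{J',w'}$ is then carried out entirely via base change through $X$, $\pi_1$, $\pi_2$, and $b'$, which is exactly the computation you sketch in your ``aligned case''---but it must take place at the level of $\CY_{J,J',\d}$, not by pretending the $J$-level data has been moved into aligned position. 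To repair your argument, replace the ``reduce to $x=1$'' step by: (i) identify $q^{-1}(\CY_{J,\d;w})$ with $\frac{U_{P_{J'}}\backslash P_K \dot w P_{\d(K)}/U_{P_{\d(J')}}}{\Ad_{\dot w\circ\d}(L_{J'}\cap P_K)}$ using Proposition~\ref{prop:cyw}; (ii) apply Proposition~\ref{prop:conj} at level $J'$ to get $f$ and the compatibility with $\Ad(\dot x)^{-1}$ on the Levi quotients; (iii) run your aligned-case computation on $X$. That is precisely the paper's proof.
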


\begin{proof}
We have the following commutative diagram
\[
\xymatrix{
\frac{L_{K}}{\Ad_{\dot w \circ \d}(L_{K})} \ar[dd]_{\Ad(\dot x) \i}^-\cong & \CY_{J, \d;w} \ar[l]_-{\pi_{J, \d; w}} \ar[r]^{i_{J, \d; w}} \ar@{}[dr]^-{\square} & \CY_{J, \d} \\
& q \i(\CY_{J, \d;w}) \ar[u]^-q \ar[d]_-f^-\cong \ar[r]^-i &  \CY_{J, J', \d} \ar[ddd]^p \ar[u]_q \\
\frac{L_{K_1}}{\Ad_{\dot u \circ \d'}(L_{K_1})} & X \ar[l]_-{\pi_1} \ar@{=}[d] & \\
\frac{(L_{K'} \cap P_{K_1}) \dot u \d' (L_{K'} \cap P_{K_1})}{\Ad_{\d'}(L_{K'} \cap P_{K_1})} \ar[u]^a \ar[d]_b \ar@{}[dr]^-{\square} & X \ar[l]_-{\pi_2} \ar[d]_{b'} & \\ \frac{L_{K'}}{\Ad_{\d'}(L_{K'})}
& \CY_{J', \d;w'} \ar[l]^-{\pi_{J', \d; w'}} \ar[r]_{i_{J', \d; w'}} & \CY_{J', \d}.
}
\]
Here $X=\frac{U_{P_{J'}} \backslash P_{K_1} \dot u \dot w' P_{\d(K_1)} /U_{P_{\d(J')}}}{\Ad_{\dot u \dot w' \circ \d}(L_{J'} \cap P_{K_1})}$. 

By Proposition \ref{prop:cyw}, $\CY_{J, \d;w} \cong \frac{U_{P_J} \backslash P_K \dot w P_{\d(K)} /U_{P_{\d(J)}}}{\Ad_{\dot w \circ \d}(L_J \cap P_K)}$. Thus by definition $q \i(\CY_{J, \d;w}) \cong \frac{U_{P_{J'}} \backslash P_K \dot w P_{\d(K)} /U_{P_{\d(J')}}}{\Ad_{\dot w \circ \d}(L_{J'} \cap P_K)}$. By Theorem \ref{thm:cyc}, $(w, K) \approx_{J, \d} (u w', K_1)$. The isomorphism $f: q \i(\CY_{J, \d;w}) \to X_1$ is given in Proposition \ref{prop:conj}. The map $\pi_1: X \to \frac{L_{K_1}}{\Ad_{\dot u \circ \d'}(L_{K_1})}$ is induced from the projection map $P_{K_1} \dot u \dot w' P_{\d(K_1)} \to L_{K_1} \dot u \dot w'$. The map $\pi_2: X \to \frac{(L_{K'} \cap P_{K_1}) \dot u \d' (L_{K'} \cap P_{K_1})}{\Ad_{\d'}(L_{K'} \cap P_{K_1})}$ is induced from the projection $$P_{K_1} \dot u \dot w' P_{\d(K_1)} \to P_{K_1} \dot u \dot w' P_{\d(K_1)} \cap L_{K'} \dot w'=(L_{K'} \cap P_{K_1}) \dot u \d' (L_{K'} \cap P_{K_1}).$$ By Proposition \ref{prop:cyw}, $\CY_{J', \d;w'} \cong \frac{U_{P_{J'}} \backslash P_{K'} \dot w' P_{\d(K')} /U_{P_{\d(J')}}}{\Ad_{\dot w' \circ \d}(L_{J'} \cap P_{K'})}$. The map $b': X \to \CY_{J', \d;w'}$ is induced from the embedding $P_{K_1} \dot u \dot w' P_{\d(K_1)} \subset P_{K'} \dot w' P_{\d(K')}$. 

Thus 
\begin{align*}
    f^{J, w}_{J', w'} \pi^*_{J, \d; w} &=i_{J', \d; w'}^* p_! q^* (i_{J, \d; w})_! \pi^*_{J, w, \d}=i_{J', \d; w'}^* p_! i_! q^* \pi^*_{J, w, \d}\\ 
    &=(b')_! f_! q^* \pi^*_{J, w, \d}=(b')_! \pi_1^* \Ad(\dot x) \i \\
    &=(b')_! \pi_2^* a^* \Ad(\dot x) \i=\pi_{J', \d; w'}^* b_! a^* \Ad(\dot x) \i \\
    &=\pi_{J', \d; w'}^* \Ind^{(L_{K'}, \d')}_{(L_{K_1}, \dot u)} \circ \Ad(\dot x) \i. 
\end{align*}
The proof is finished. 
\end{proof}

\subsection{A special case} Now we discuss a special case of Theorem \ref{thm:main}. 

Let $J \subset J' \subset S$. Let $w \in {}^J W$ and $w' \in {}^{J'} W$ such that $w \in W_{J'} \cdot_\d w'$ and $\ell(w)=\ell(w')$. Set $K=I(J, w, \d)$ and $K'=I(J', w', \d)$. By Theorem \ref{thm:cyc}, there exists a unique $x \in W_{J'} \cap W^{K'}$ such that $x \i w \d(x)=w'$ and $(w, K) \approx_{J, \d} (w', K_1)$, where $K_1=\Ad(x) \i(K) \subset K'$. Since $\Ad(w) \d(K)=K$, we have $\Ad(w') \d(K_1)=K_1$. Set $\d'=\Ad(\dot w') \circ \d$. Then $L_{K_1}$ is a $\d'$-stable Levi subgroup and $L_{K'} \cap P_{K_1}$ is a $\d'$-stable parabolic subgroup of $L_{K'}$. The functor $\Ind^{(L_{K'}, \d')}_{(L_{K_1}, \dot u)}$ in Theorem \ref{thm:main} in this case is the Harish-Chandra induction functor. 

In this case, the functors $f^{J, w}_{J', w'}$ is just the Harish-Chandra induction functor, composed with the conjugation by $\dot x \i$. 

\subsection{Generalization to loop groups} Theorem \ref{thm:cyc} holds not only for Weyl groups but also for arbitrary Coxeter groups. The same proof works in such generality. In particular, Theorem \ref{thm:cyc} holds for any affine Weyl group together with a length-preserving automorphism on it. 

Let $\kk$ be an algebraically closed field and $F=\kk((\e))$ be the field of the formal Laurent series. Let $\bG$ be a connected reductive group over $F$ and $G=\bG(F)$. Let $I$ be an Iwahori subgroup of $G$ and $\tW$ be the Iwahori-Weyl group of $G$. Let $P \supset I$ be a parahoric subgroup. It is a pro-algebraic group. Let $U_P$ be the pro-unipotent radical of $P$ and $L \cong P/U_P$ be its reductive quotient. In \cite{Lu3}, Lusztig introduced the affine analogy of the parabolic character sheaves. They are certain simple perverse sheaves on the ind-stack $\frac{U_P \backslash G/U_P}{\D(L)}$. 

Note that the Iwahori-Weyl group $\tW$ is not a Coxeter group in general, but a quasi-Coxeter group. Namely, let $G^0$ be the subgroup of $G$ generated by all the parahoric subgroups. The Iwahori-Weyl group of $G^0$ is an affine Weyl group $W_a$. And $\tW=W_a \rtimes \Omega$, where $\Omega$ is the subgroup of length-zero elements of $\tW$. The conjugation action of any element in $\Omega$ on $W_a$ is a length-preserving automorphism. We have $$\frac{U_P \backslash G/U_P}{\D(L)}=\bigsqcup_{\t \in \Omega} \frac{U_P \backslash G^0 \dot \t/U_P}{\D(L)} \cong \bigsqcup_{\t \in \Omega} \frac{U_P \backslash G^0/U_{\d_{\dot \t }(P)}}{\Ad_{\d_{\dot \t}}(L)}.$$ Here $\d_{\dot \t}$ is the automorphism on $G^0$ given by the conjugation action of $\dot \t$. Now Theorem \ref{thm:main} may be applied to $\frac{U_P \backslash G^0/U_{\d_{\dot \t }(P)}}{\Ad_{\d_{\dot \t}}(L)}$ using Theorem \ref{thm:cyc} for the pair $(W_a, \Ad(\t))$.

\end{document}